\newtheorem{thm}{Theorem}[section]
\newtheorem{cor}[thm]{Corollary}
\newtheorem{lemma}[thm]{Lemma}
\newtheorem{prop}[thm]{Proposition}
\theoremstyle{definition}
\newtheorem{defn}[thm]{Definition}
\newtheorem{remark}[thm]{Remark}
\newtheorem{exam}[thm]{Example}
\newcommand{\ff}[1]{\mathfrak{#1}}
\newcommand{\inner}[2]{\left\langle {#1}, {#2} \right\rangle}
\begin{document}

\title[A Spectral Characterization of $\mathcal{AN}$ operators]{A Spectral Characterization of $\mathcal{AN}$ Operators}

\author[S.~K.~Pandey And V.~I.~Paulsen]{Satish K.~Pandey And Vern I.~Paulsen}
\address{Department of Pure Mathematics, University of Waterloo,
Ontario, Canada N2L 3G1}
\email{satish.pandey@uwaterloo.ca}
\address{Department of Pure Mathematics, University of Waterloo,
Ontario, Canada N2L 3G1}
\email{vpaulsen@uwaterloo.ca}
\date{January 5, 2015}
\subjclass[2010]{Primary 47B07, 47A10, 47A75; Secondary 47L07, 47L25, 47B65, 46L05}

\begin{abstract}
We establish a spectral characterization theorem for the operators on
complex Hilbert spaces of arbitrary dimensions that attain their norm on every closed subspace. The class of these operators is not closed under addition. Nevertheless, we prove that the intersection of these operators with the positive operators forms a proper cone in the real Banach space of hermitian operators.
\end{abstract}

\maketitle


\section{Introduction} 

Throughout this paper $\mathcal{H}$ and $\mathcal{K}$ will denote complex
Hilbert spaces and we write $\mathcal{B}(\mathcal{H},\mathcal{K})$ for the set of all
bounded, linear operators from $\mathcal{H}$ to $\mathcal{K}$. We recall that $\mathcal{B}(\mathcal{H},\mathcal{K})$ is a complex Banach space with respect to the operator norm
$$
\|T\| = \text{sup}\{\|Tx\|: x\in \mathcal{H}, \|x\| \leqslant 1\}.
$$
\begin{defn}
An operator $T\in \mathcal{B}(\mathcal{H},\mathcal{K})$ is said to be an $\mathcal N$
operator or to satisfy the property $\mathcal{N}$ if there is an element $x$ in the unit sphere of $\mathcal{H}$ such that $\|T\|=\|Tx\|$.
\end{defn}
Such operators achieve their norm and hence are known as {\it norming}
 operators. A generalization of the property $\mathcal{N}$ leads to a new class of operators in $\mathcal{B}(\mathcal{H},\mathcal{K})$. 
\begin{defn}
An operator $T\in \mathcal{B}(\mathcal{H},\mathcal{K})$ is said to be an $\mathcal{AN}$ operator or to satisfy the property $\mathcal{AN}$, if for every nontrivial closed subspace $\mathcal{M}$
of $\mathcal{H}$, $T|_{\mathcal{M}}$ satisfies the property $\mathcal{N}$.
\end{defn}
Alternatively, an operator $T\in \mathcal{B}(\mathcal{H},\mathcal{K})$ is said to be
an $\mathcal{AN}$ operator if for every nontrivial closed subspace
$\mathcal{M}$ of $\mathcal{H}$ there is an element $x\in \mathcal{M}, \|x\|=1$ such
that $\|T|_{\mathcal{M}}\|=\|T|_{\mathcal{M}}(x)\|$. Since these operators, when
restricted to any nontrivial closed subspace of $\mathcal{H}$, achieve
their norm on that closed subspace, we say that these operators are {\it
  absolutely norming} and hence the name  $\mathcal{AN}$ operator. Needless
to say,  every $\mathcal{AN}$ operator is an $\mathcal{N}$ operator. \\

The $\mathcal{AN}$ operators were introduced and studied in \cite{CN}
and \cite{R}. Carvajal and Neves \cite{CN} proved a partial structure theorem \cite[Theorem 3.25]{CN} for the class of positive $\mathcal{AN}$ operators on complex Hilbert spaces that included an uncharacterized ``remainder" operator. This theorem motivated Ramesh \cite{R}
to attempt to obtain a full characterization theorem \cite[Theorem 2.3]{R}, without remainder, for positive $\mathcal{AN}$ operators on separable complex Hilbert spaces.  

In this paper, we present a counterexample to Ramesh's characterization theorem \cite[Theorem 2.3]{R}. We then give a full spectral characterization of the class of positive  $\mathcal{AN}$ operators on complex Hilbert spaces of arbitrary dimensions, earlier results needed to assume separability. The correct characterization requires more terms than were used in \cite{R} and \cite{CN}. 
Using this theorem, we prove a full characterization theorem for the class of $\mathcal{AN}$ operators on complex Hilbert spaces of arbitrary dimension.

We begin by giving noninductive proofs of some basic facts, which allows us to remove the separability assumption. In section 3 and 4, we derive necessary and sufficient conditions for a positive operator that satisfies the $\mathcal{AN}$ condition and consequently establish a spectral characterization theorem (see \ref{SpThPAN}) for these operators in section 5. This theorem, together with the polar decomposition theorem, then paves the way for our main result in section 6: the full spectral characterization of the class of $\mathcal{AN}$ operators (see \ref{SpThAN}). The class of these operators is not closed under addition. Nevertheless, we prove that the class of positive $\mathcal{AN}$ operators is a proper cone in the real Banach space of hermitian operators.

We end this section by presenting a counterexample to \cite[Theorem 2.3] {R}. 
\begin{exam}
Consider the operator
$$
T=\begin{bmatrix}
 \frac{1}{2}\\
    & 1 & & & \text{\huge0}\\
    & & 1\\
    & & & \ddots &\\
    & \text{\huge0} & & & 1\\
    & & & & & 1
\end{bmatrix} \in \mathcal{B}( l^2).
$$
That $T$ is positive operator on a separable Hilbert space is obvious. $T$ is not compact. The infimum of the eigenvalues of this operator, Ramesh's $m(T)=1/2$. The operator $T-m(T)I=\text{diag}(0,1/2,1/2,...)$ is not compact. Consequently, $T$ is neither compact nor of the form $K + m(T)I$ for some positive compact operator $K$. Even more, there does not exist $\alpha \geq 0$ such that $T=K+\alpha I$ for some positive compact operator $K$. Thus, if \cite[Theorem 2.3]{R} was correct, then $T$ would not satisfy $\mathcal{AN}.$

However, we now prove that $T$ satisfies the property $\mathcal{AN}$. Suppose that $\mathcal{M}$ is an arbitrary nontrivial closed subspace of $\mathcal{H}$. If $\mathcal{M}$ is one dimensional, then $T|_{\mathcal{M}}$ attains its norm at any vector in $\mathcal{M}$ with unit norm. 
If dim$(\mathcal{M})\geq 2$ and $\mathcal{M}$ contains two noncollinear vectors which are nonzero in the first entry, then there exists a linear combination of these two vectors with $0$ in the first entry. Letting $x_0$ be the normalization of this vector, we get 
$1=\|x_0\|=\|T(x_0)\|\leq \|T|_{\mathcal{M}}\|\leq\|T\|=1$ and so we have equality throughout and $T$ attains its norm on $\mathcal{M}$.\\
Finally, if dim$(\mathcal{M})\geq 2$ and it does not contain any two such vectors, then it either has a single such vector and its scalar multiples or no such vector. Since dim$(\mathcal{M})\geq 2$, $\mathcal{M}$ has at least one vector linearly independent from all vectors with nonzero first entry and that vector must have $0$ in its first entry. If we normalize this vector --- we call this vector $x_0$ --- we get $1=\|x_0\|=\|T(x_0)\|\leq \|T|_{\mathcal{M}}\|\leq\|T\|=1$ and hence $T$ attains its norm on $\mathcal{M}$. This proves the assertion and serves to be a counterexample to the characterization Theorem $2.3$ of  \cite {R}.
\end{exam}

While some of our results have parallels in \cite{CN} and \cite{R}, our proofs are quite different, since we do not assume separability and avoid representing operators by ordered series indexed by $\mathbb N.$

\section{Preliminaries} 

\begin{prop}
If $T\in \mathcal{B}(\mathcal{H},\mathcal{K})$ is a compact operator, then $T$ satisfies the property $\mathcal{AN}$.
\end{prop}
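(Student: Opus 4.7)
The plan is to reduce the statement to the well-known fact that every compact operator on a Hilbert space attains its norm, and then supply a direct weak-compactness proof of that fact so the argument is self-contained and independent of separability.

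First I would observe that for any nontrivial closed subspace $\mathcal{M}\subseteq\mathcal{H}$, the restriction $T|_{\mathcal{M}}$ is again compact, since it is the composition of $T$ with the isometric inclusion $\mathcal{M}\hookrightarrow\mathcal{H}$, and compact operators form a two-sided ideal. Hence it suffices to show that every nonzero compact operator $S\in\mathcal{B}(\mathcal{M},\mathcal{K})$ attains its norm on the unit sphere of $\mathcal{M}$; applying this to $S=T|_{\mathcal{M}}$ then gives the $\mathcal{AN}$ property. (If $T|_{\mathcal{M}}=0$, any unit vector in $\mathcal{M}$ is a norming vector.)

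To show a compact operator $S\colon\mathcal{M}\to\mathcal{K}$ attains its norm, I would choose a sequence $(x_n)$ of unit vectors in $\mathcal{M}$ with $\|Sx_n\|\to\|S\|$. Since the closed unit ball of the Hilbert space $\mathcal{M}$ is weakly compact, after passing to a subnet (or subsequence, in the separable case; the argument works verbatim with nets in the general case) we may assume $x_n\rightharpoonup x_0$ for some $x_0\in\mathcal{M}$ with $\|x_0\|\leq 1$. Compactness of $S$ then upgrades weak convergence to norm convergence of the images: $Sx_n\to Sx_0$, so $\|Sx_0\|=\|S\|$.

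Finally I would rule out $\|x_0\|<1$. If $S\neq 0$ and $\|x_0\|<1$, then since $\|Sx_0\|=\|S\|>0$ we have $x_0\neq 0$, and the vector $y_0=x_0/\|x_0\|$ is a unit vector in $\mathcal{M}$ with
\[
\|Sy_0\|=\frac{\|Sx_0\|}{\|x_0\|}=\frac{\|S\|}{\|x_0\|}>\|S\|,
\]
contradicting the definition of the operator norm. Hence $\|x_0\|=1$ and $S$ attains its norm at $x_0$. The only delicate point in the plan is handling general (non-separable) $\mathcal{H}$ without invoking sequences indexed by $\mathbb{N}$, but this is accommodated by replacing subsequences with subnets and using weak compactness of the unit ball of $\mathcal{M}$ directly, in line with the noninductive, separability-free style the authors emphasize.
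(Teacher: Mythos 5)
Your proof is correct and follows essentially the same route as the paper: reduce to showing that a compact operator attains its norm (since restrictions to closed subspaces remain compact), then extract a norming vector from compactness of the image of the unit ball and normalize. The only difference is that you justify norm attainment explicitly via weak compactness of the ball, where the paper instead cites the fact that $T(B)$ is norm compact.
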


\begin{proof} If $T$ is a compact operator from $\mathcal H$ to $\mathcal K$
  then the restriction of $T$ to any closed subspace $\mathcal M$ is a
  compact operator from $\mathcal M$ to $\mathcal K.$ So it will be sufficient
  to prove that if $T$ is a compact operator then $T$ satisfies $\mathcal
  N.$

Let $B = \{x\in \mathcal{H}: \|x\|\leqslant 1\}$ be the closed unit ball of
$\mathcal{H}$. Since $T$ is a compact operator, $T(B)$ is a compact subset of
$\mathcal{K}$ in the norm topology \cite[page 55]{KR}. Also, $\|\cdot\|_{\mathcal{K}}:T(B)\longrightarrow [0,\infty)$ is a continuous function on $T(B)$. Consequently we have $\sup\{\|Tx\|_{\mathcal{K}}:\|x\|_{\mathcal{H}}\leqslant 1\} = \max\{\|Tx\|_{\mathcal{K}}:\|x\|_{\mathcal{H}}\leqslant 1\}$. It therfore implies that there exists $x_0\in B$ such that
$\|T\| = \|Tx_0\|_{\mathcal{K}}$. This, together with $\|Tx_0\|_{\mathcal{K}} \leqslant \|T\|\|x_0\|_{\mathcal{H}} \leqslant \|T\|$, implies that $\|x_0\|_{\mathcal{H}}=1.$ This proves the proposition.
\end{proof}


\begin{prop}\cite[Proposition 2.3]{CN}\label{norming}
Let $T\in \mathcal{B}(\mathcal{H})$ be a self-adjoint operator. Then $T$ satisfies $\mathcal{N}$ iff $\|T\|$ or $-\|T\|$ is an eigenvalue of $T$.
\end{prop}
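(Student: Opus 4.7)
The plan is to split into the two implications, with the forward direction the substantive one.

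The reverse implication is immediate: if $\lambda \in \{\|T\|, -\|T\|\}$ is an eigenvalue then there is a nonzero eigenvector, which after normalization yields a unit vector $x$ with $\|Tx\| = |\lambda|\,\|x\| = \|T\|$, so $T$ satisfies $\mathcal{N}$.

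For the forward direction, suppose $T$ is self-adjoint and $x \in \mathcal{H}$ is a unit vector with $\|Tx\| = \|T\|$. The key idea is to use self-adjointness to promote $x$ to an eigenvector of $T^2$ at $\|T\|^2$. Specifically, self-adjointness gives
\[
\langle T^2 x, x\rangle \;=\; \langle Tx, Tx\rangle \;=\; \|Tx\|^2 \;=\; \|T\|^2.
\]
On the other hand, by Cauchy--Schwarz and submultiplicativity,
\[
\|T\|^2 \;=\; \langle T^2x, x\rangle \;\leq\; \|T^2 x\|\,\|x\| \;\leq\; \|T\|^2\|x\|^2 \;=\; \|T\|^2,
\]
so equality holds throughout. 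The equality case of Cauchy--Schwarz forces $T^2 x$ to be a scalar multiple of $x$, and comparing with $\langle T^2 x, x\rangle = \|T\|^2$ pins the scalar down to $\|T\|^2$. Hence $T^2 x = \|T\|^2 x$, i.e.
\[
(T-\|T\|I)(T+\|T\|I)\,x \;=\; 0.
\]

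From here the conclusion is a clean case split. Let $y = (T+\|T\|I)x$. If $y = 0$, then $Tx = -\|T\|\,x$, so $-\|T\|$ is an eigenvalue. Otherwise $y \neq 0$ and $(T-\|T\|I)y = 0$, so $Ty = \|T\|\,y$ and $\|T\|$ is an eigenvalue. I expect the only mildly delicate step to be justifying the Cauchy--Schwarz equality case and ruling out the degenerate situation $\|T\| = 0$ (in which case $T = 0$ and every scalar, including $0 = \pm\|T\|$, is trivially an eigenvalue, so the statement holds vacuously). No separability or spectral theorem is invoked; the argument is entirely elementary and works in any dimension.
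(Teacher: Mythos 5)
Your proof is correct, and the delicate points (the Cauchy--Schwarz equality case, the factorization $T^2-\|T\|^2I=(T-\|T\|I)(T+\|T\|I)$, and the degenerate case $\|T\|=0$) are all handled properly. Note that the paper itself offers no proof of this proposition --- it is quoted from \cite[Proposition 2.3]{CN} --- so you have supplied a complete, elementary, dimension-free argument where the paper defers to a citation; your route (showing $x$ is an eigenvector of $T^2$ at $\|T\|^2$ and then splitting on whether $(T+\|T\|I)x$ vanishes) is the standard one and is consistent with the way the surrounding results (e.g.\ Theorem~\ref{TiffT*T}) are argued in the paper.
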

%
%

This result leads us to the following theorem.

\begin{thm} \label{!}
Let $T\in \mathcal{B}(\mathcal{H})$ be a positive operator. Then $T$ satisfies $\mathcal{N}$ iff $\|T\|$ is an eigenvalue of $T$.
\end{thm}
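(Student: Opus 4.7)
The plan is to deduce this theorem directly from Proposition \ref{norming} by observing that a positive operator cannot have negative eigenvalues. Since $T$ is positive, it is in particular self-adjoint, so Proposition \ref{norming} applies and tells us that $T$ satisfies $\mathcal{N}$ if and only if $\|T\|$ or $-\|T\|$ is an eigenvalue of $T$. All that remains is to collapse the two possibilities into one.

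For the forward direction, suppose $T$ satisfies $\mathcal{N}$. By Proposition \ref{norming}, at least one of $\pm\|T\|$ is an eigenvalue. Recall that for a positive operator, $\inner{Tx}{x} \geq 0$ for every $x$, so if $Tx = \lambda x$ with $x \neq 0$ then $\lambda \|x\|^2 = \inner{Tx}{x} \geq 0$, forcing $\lambda \geq 0$. Hence $-\|T\|$ is an eigenvalue only in the degenerate case $\|T\|=0$, i.e.\ $T=0$, in which case $\|T\|=0$ is trivially an eigenvalue. In every case, $\|T\|$ is an eigenvalue of $T$.

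The reverse direction is immediate: if $\|T\|$ is an eigenvalue, choose a corresponding unit eigenvector $x$, and then $\|Tx\| = \|\|T\| x\| = \|T\|$, so $T$ satisfies $\mathcal{N}$.

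No step here is really an obstacle; the essential content is already contained in Proposition \ref{norming}, and the positivity hypothesis is used only through the elementary fact that the spectrum (hence the point spectrum) of a positive operator lies in $[0,\infty)$. The proof will be a short paragraph verifying these two implications.
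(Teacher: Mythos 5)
Your proof is correct and follows exactly the route the paper intends: the theorem is stated there as an immediate consequence of Proposition \ref{norming}, with positivity used only to rule out the possibility that $-\|T\|$ is a (necessarily nonnegative) eigenvalue, and your handling of the degenerate case $T=0$ is fine.
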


\begin{thm}\label{TiffT*T}
Let $\mathcal{H}$ and $\mathcal{K}$ be complex Hilbert spaces and $T\in \mathcal{B}(\mathcal{H},\mathcal{K})$.
Then $T$ satisfies $\mathcal{N}$ iff $T^*T$ satisfies $\mathcal{N}$. 
\end{thm}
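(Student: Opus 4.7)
The plan is to use Theorem \ref{!} to replace the $\mathcal{N}$ condition on the positive operator $T^*T$ by the spectral condition that $\|T^*T\| = \|T\|^2$ is an eigenvalue of $T^*T$. The key identity tying the two sides together is $\|Tx\|^2 = \inner{T^*Tx}{x}$ for every $x\in\mathcal{H}$, together with $\|T^*T\|=\|T\|^2$.

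For the $(\Leftarrow)$ direction, suppose $T^*T$ satisfies $\mathcal{N}$. Since $T^*T$ is positive, Theorem \ref{!} provides a unit vector $x\in\mathcal{H}$ with $T^*Tx=\|T^*T\|\,x=\|T\|^2\,x$. Taking inner product with $x$ yields
\[
\|Tx\|^2 = \inner{T^*Tx}{x} = \|T\|^2,
\]
so $\|Tx\|=\|T\|$, which shows $T$ satisfies $\mathcal{N}$.

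For the $(\Rightarrow)$ direction, suppose there is a unit vector $x$ with $\|Tx\|=\|T\|$. Then
\[
\inner{T^*Tx}{x} = \|Tx\|^2 = \|T\|^2 = \|T^*T\|.
\]
On the other hand, by Cauchy--Schwarz, $\inner{T^*Tx}{x}\leq \|T^*Tx\|\cdot\|x\| \leq \|T^*T\|$, so equality forces $\|T^*Tx\|=\|T^*T\|$, showing that $T^*T$ satisfies $\mathcal{N}$. (Alternatively, one can invoke Theorem \ref{!} after observing that equality in Cauchy--Schwarz makes $x$ an eigenvector of $T^*T$ at the eigenvalue $\|T\|^2$.)

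There is essentially no obstacle here: the statement reduces to a one-line computation in each direction once Theorem \ref{!} and the $C^*$-identity $\|T^*T\|=\|T\|^2$ are available. The only mildly subtle point is the $(\Rightarrow)$ step, where one must confirm that the norm of $T^*T$ is actually \emph{attained} and not merely that its value $\|T\|^2$ is reached by the quadratic form $\inner{T^*T\,\cdot\,}{\cdot}$; the Cauchy--Schwarz pinch above handles this cleanly.
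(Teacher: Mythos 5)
Your proof is correct and follows essentially the same route as the paper: the forward direction uses the same Cauchy--Schwarz pinch $\|T^*T\| = \|T\|^2 = \inner{T^*Tx}{x} \le \|T^*Tx\| \le \|T^*T\|$, and the converse invokes Theorem \ref{!} to produce a unit eigenvector of $T^*T$ at the eigenvalue $\|T^*T\|$ and computes $\|Tx\|^2 = \inner{T^*Tx}{x} = \|T\|^2$. No gaps.
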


\begin{proof} First assume that $T$ satisfies $\mathcal{N}$. There exists $x$ in the unit sphere of $\mathcal{H}$ such that $\|Tx\|=\|T\|.$ Then 
\[\|T^*T\| = \|T\|^2 = \langle Tx, Tx \rangle = \langle T^*Tx,x \rangle \le \|T^*Tx\| \le \|T^*T\|,\]
and so we have equality throughout which implies that $T^*T$ satisfies $\mathcal{N}$.

Conversely, if $T^*T$ satisfies $\mathcal{N}$, then by Theorem \ref{!} $\|T^*T\|$ is an eigenvalue of $T^*T$. Suppose $y\in \mathcal{H}$ is the corresponding eigenvector of unit norm. Then
$ \|Ty\|^2 = \langle T^*Ty,y \rangle =\langle \|T^*T\|y,y\rangle = \|T\|^2,$
and the result follows.
\end{proof}
 
Let $T \in B(\mathcal H, \mathcal K)$, recall that every positive operator
has a unique positive square root and that $|T| := \sqrt{T^*T}.$

\begin{thm}\label{!!}
Let $\mathcal{H}$ and $\mathcal{K}$ be complex Hilbert spaces and $T\in \mathcal{B}(\mathcal{H},\mathcal{K})$. Then the following statements are equivalent.\\
$(1)$ $T$ satisfies $\mathcal{N}$.\\
$(2)$ $T^*$ satisfies $\mathcal{N}$.\\
$(3)$ $\||T|\|$ is an eigenvalue of $|T|$.\\
$(4)$ $\|T\|$ is an eigenvalue of $|T|.$\\
$(5)$ $|T|$ satisfies $\mathcal{N}$.\\
$(6)$ $|T^*|$ satisfies $\mathcal{N}$.\\
$(7)$ $|T|^2$ satisfies $\mathcal{N}$.\\
$(8)$ $|T^*|^2$ satisfies $\mathcal{N}$.\\
$(9)$ $\|T\|$ is an eigenvalue of $|T^*|$.
\end{thm}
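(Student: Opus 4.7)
The plan is to organize the nine conditions into two clusters, each linked internally by routine applications of Theorems \ref{!} and \ref{TiffT*T}, and then bridge the two clusters by a single short duality argument between $T$ and $T^*$.

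The first cluster consists of (1), (3), (4), (5), (7). I would argue as follows. The equivalence (3) $\Leftrightarrow$ (4) is immediate from $\|T\| = \||T|\|$. The equivalence (4) $\Leftrightarrow$ (5) is Theorem \ref{!} applied to the positive operator $|T|$. For (5) $\Leftrightarrow$ (7), apply Theorem \ref{TiffT*T} with $T$ replaced by $|T|$, noting that $|T|^*|T| = |T|^2$. Finally (1) $\Leftrightarrow$ (7) is Theorem \ref{TiffT*T} applied directly to $T$, since $T^*T = |T|^2$.

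The second cluster is (2), (6), (8), (9), handled by the same template. The equivalence (6) $\Leftrightarrow$ (9) is Theorem \ref{!} applied to the positive operator $|T^*|$, using $\||T^*|\| = \|T^*\| = \|T\|$. The equivalence (6) $\Leftrightarrow$ (8) is Theorem \ref{TiffT*T} applied to $|T^*|$. And (2) $\Leftrightarrow$ (8) follows from Theorem \ref{TiffT*T} applied to $T^*$, since $(T^*)^*(T^*) = TT^* = |T^*|^2$.

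It then remains to link the two clusters, and for this I would establish (1) $\Leftrightarrow$ (2) directly. Assuming $T \neq 0$ (the case $T=0$ is trivial), pick a unit vector $x \in \mathcal{H}$ with $\|Tx\| = \|T\|$ and set $y = Tx/\|Tx\| \in \mathcal{K}$. Then
\[
\|T^*y\| \;\geq\; |\langle T^*y, x\rangle| \;=\; |\langle y, Tx\rangle| \;=\; \|Tx\| \;=\; \|T\|,
\]
while $\|T^*y\| \leq \|T^*\|\,\|y\| = \|T\|$, so $T^*$ attains its norm at the unit vector $y$; the reverse implication is symmetric. I do not expect any real obstacle: the whole theorem is essentially a bookkeeping exercise that threads together the earlier results, and the only independent idea needed is the standard observation that $Tx/\|Tx\|$ is a norming vector for $T^*$ whenever $x$ is a norming vector for $T$.
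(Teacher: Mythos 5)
Your proposal is correct and follows essentially the same route as the paper: the same two clusters $\{(1),(3),(4),(5),(7)\}$ and $\{(2),(6),(8),(9)\}$ are linked internally via Theorems \ref{!} and \ref{TiffT*T}, and then bridged by proving $(1)\Leftrightarrow(2)$. The only (cosmetic) difference is in the bridge: the paper routes through the unit eigenvector of $|T|$ for the eigenvalue $\|T\|$ and checks that $T(z/\|T\|)$ norms $T^*$, whereas you obtain the same norming vector $Tx/\|Tx\|$ directly by Cauchy--Schwarz, which is marginally more self-contained but not a genuinely different argument.
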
 
\begin{proof} The equivalence of (1) and (7) follows from Theorem~\ref{TiffT*T} as does the equivalence of (5) and (7). Since $\| |T|\|= \|T\|,$ by Theorem~\ref{!}, (5) is equivalent to (3) and (4). 
 
Replacing $T$ by $T^*$ in these equivalences and using that $\|T\|=\|T^*\|$, shows the equivalence of (2), (6), (8) and (9). 

All that remains is to show the equivalence of $(1)$ and $(2)$.
 Assume that $T$ satisfies $\mathcal{N}$. By equivalence of $(1)$ and $(4)$, $\|T\|$ is an eigenvalue of $|T|$. Let $z\in \mathcal{H}$ be an eigenvector of $|T|$ of unit norm corresponding to the eigenvalue $\|T\|$.  Since $|T|(z) = \|T\|z$ we have $T^*Tz=|T|^2(z) = |T|(|T|(z)) = \|T\|^2z .$  Consequently, $\|T^*(Tz)\| =\|T\|^2= \|T\|\|T^*\|$, since $\|z\|=1.$ Notice that $T(\frac{z}{\|T\|})$ is in the unit sphere of $\mathcal{K}$ and hence $\|T^*(T\frac{z}{\|T\|})\| = \|T^*\|$ which means that $T^*$ satisfies $\mathcal{N}.$ This proves that $(1)$ implies $(2)$. The backward implication follows if we replace $T$ by $T^*$ in the proof and use $T^{**}=T.$ This completes the proof.  
\end{proof}
 \begin{remark} Later (see \ref{coisom}) we will give an example of an operator such that $T$ is $\mathcal{AN}$ but $T^*$ is not $\mathcal{AN}.$
 \end{remark}

\section{Necessary conditions for positive $\mathcal{AN}$ Operators}

The purpose of this section is to study the properties of positive $\mathcal{AN}$ operators.

\begin{thm}
Let $T\in \mathcal{B}(\mathcal{H})$ be a positive $\mathcal{AN}$ operator. Then $\mathcal{H}$
has an orthonormal basis consisting of eigenvectors of $T$.
\end{thm}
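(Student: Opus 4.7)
The natural approach is a Zorn's lemma argument that iteratively extracts eigenvectors. Since $T$ is positive, it is self-adjoint, so any $T$-invariant closed subspace has a $T$-invariant orthogonal complement, and this is what lets us ``peel off'' eigenvectors while staying in a positive $\mathcal{AN}$ setting.

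Concretely, I would let $\mathscr{F}$ denote the collection of orthonormal subsets of $\mathcal{H}$ whose elements are all eigenvectors of $T$, partially ordered by inclusion. Any chain has its union as an upper bound in $\mathscr{F}$ (orthonormality and the eigenvector property are preserved under unions of chains), so Zorn's lemma furnishes a maximal element $E = \{e_\alpha\}_{\alpha \in I} \in \mathscr{F}$. Let $\mathcal{M} = \overline{\mathrm{span}}(E)$. I claim $\mathcal{M} = \mathcal{H}$, which is exactly what the theorem asserts.

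Suppose for contradiction that $\mathcal{M} \neq \mathcal{H}$, so $\mathcal{M}^\perp$ is a nontrivial closed subspace. Because $\mathcal{M}$ is spanned by eigenvectors of $T$ it is $T$-invariant, and since $T = T^*$ the orthogonal complement $\mathcal{M}^\perp$ is $T$-invariant as well. Hence $T|_{\mathcal{M}^\perp}$ is a well-defined positive operator on the Hilbert space $\mathcal{M}^\perp$. The $\mathcal{AN}$ hypothesis applied to the nontrivial closed subspace $\mathcal{M}^\perp$ tells us that $T|_{\mathcal{M}^\perp}$ satisfies property $\mathcal{N}$, so by Theorem \ref{!} the scalar $\|T|_{\mathcal{M}^\perp}\|$ is an eigenvalue of $T|_{\mathcal{M}^\perp}$. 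Pick a corresponding unit eigenvector $y \in \mathcal{M}^\perp$; then $Ty = \|T|_{\mathcal{M}^\perp}\| \, y$, so $y$ is a unit eigenvector of $T$ orthogonal to every element of $E$. (This works uniformly whether $\|T|_{\mathcal{M}^\perp}\|$ is positive or zero; in the latter case $y$ is just a unit vector in $\ker T \cap \mathcal{M}^\perp$, which is still a $T$-eigenvector.) Then $E \cup \{y\} \in \mathscr{F}$ strictly enlarges $E$, contradicting its maximality. Thus $\mathcal{M} = \mathcal{H}$, and $E$ is the desired orthonormal basis of eigenvectors.

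I expect the only delicate point to be the verification that $T|_{\mathcal{M}^\perp}$ really is an operator on $\mathcal{M}^\perp$ (so that Theorem \ref{!} applies with its eigenvalue conclusion), which is exactly where self-adjointness of $T$ enters. Everything else is a routine Zorn's lemma bookkeeping argument, and handling arbitrary dimension costs nothing extra since Zorn's lemma does not require separability.
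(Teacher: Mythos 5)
Your proof is correct and takes essentially the same route as the paper's: fix a maximal orthonormal set of eigenvectors, note that its closed span and hence (by self-adjointness) the orthogonal complement are $T$-invariant, and use the $\mathcal{AN}$ hypothesis together with Theorem \ref{!} to produce a new eigenvector in the complement, contradicting maximality. Your explicit Zorn's lemma bookkeeping and the remark covering the case $\|T|_{\mathcal{M}^\perp}\|=0$ are slightly more careful than the paper's wording, but the argument is the same.
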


\begin{proof}
Let $\mathcal{B} = \{v_\alpha: \alpha\in \Lambda\}$ be the maximal orthonormal set of eigenvectors of $T$. That $\mathcal{B}$ is nonempty is a trivial observation; for $T$, being a positive $\mathcal{AN}$ operator, must have $\|T\|$ as its eigenvalue. Considering $w$ to be a unit eigenvector corresponding to the eigenvalue $\|T\|$, we have $Tw = \|T\|w$ which implies that $w\in \mathcal{B}.$

To show that $\mathcal{H}$ has an orthonormal basis consisting entirely of eigenvectors of $T$ we define $\mathcal{H}_0 := \text{clos}(\text{span}(\mathcal{B}))$ and show that $\mathcal{H}_0 = \mathcal{H}.$ It suffices to show that $\mathcal{H}_0^{\perp} = \{0\}$; for then $\mathcal{H}_0 = \mathcal{H}_0^{\perp \perp} = \{0\}^{\perp} = \mathcal{H}.$

We first claim that $\mathcal{H}_0^{\perp}$ is an invariant subspace of $\mathcal{H}$ under $T$. To see this, let 
$\mathcal{F}$ denote the collection of finite subsets of $\Lambda$, that is, $\mathcal{F} = \{F\subseteq \Lambda: F \text{ is finite}\}$. If $v\in \mathcal{H}_0$, then
$$v=\sum_{\alpha\in \Lambda}\inner{v}{v_\alpha}v_\alpha 
= \lim_{F\in \mathcal{F}} \sum_{\alpha \in F}\inner{v}{v_\alpha}v_\alpha.$$
Since 
the above limit is the norm limit and 
$T$ is bounded (norm continuous), it follows that 
$$
Tv 
= T\Big(\lim_{F\in \mathcal{F}} \sum_{\alpha \in F}\inner{v}{v_\alpha}v_\alpha\Big)
= \lim_{F\in \mathcal{F}}\sum_{\alpha\in F}\inner{v}{v_\alpha}\beta_\alpha v_\alpha
= \sum_{\alpha\in \Lambda}\inner{v}{v_\alpha}\beta_\alpha v_\alpha \in \mathcal{H}_0,
$$
considering $Tv_\alpha= \beta_\alpha v_\alpha$ where $\beta_\alpha \in \mathbb{C}$ for every $\alpha\in \Lambda.$ This shows that $\mathcal{H}_0$ is an invariant subspace of $\mathcal{H}$ under $T$. Since $T=T^*$, we infer that $\mathcal{H}_0^{\perp}$ is also an invariant subspace of $\mathcal{H}$ under $T$.

We complete the proof by showing $\mathcal{H}_0^{\perp} = \{0\}$. Suppose, on the contrary, that $\mathcal{H}_0^{\perp} \neq \{0\}$, that is, $\mathcal{H}_0^{\perp}$ is a nontrivial closed subspace of $\mathcal{H}$.  Since $T$ is a positive $\mathcal{AN}$ operator, $T|_{\mathcal{H}_0^{\perp}}$ satisfies the  property $\mathcal{N}$. Even more, $T|_{\mathcal{H}_0^{\perp}}$ is a positive operator on $\mathcal{H}_0^{\perp}$ which satisfies $\mathcal{N}$ because $\mathcal{H}_0^{\perp}$ is invariant under $T$. Consequently, $\|T|_{\mathcal{H}_0^{\perp}}\|$ is an eigenvalue of $T|_{\mathcal{H}_0^{\perp}}$. Let $z$ be a unit eigenvector of $T|_{\mathcal{H}_0^{\perp}}$ corresponding to the eigenvalue $\|T|_{\mathcal{H}_0^{\perp}}\|$. Clearly then $z\in \mathcal{H}_0^{\perp}$ such that $\|z\|=1$ and $T|_{\mathcal{H}_0^{\perp}}(z) = \|T|_{\mathcal{H}_0^{\perp}}\|z$, which implies that $Tz = T|_{\mathcal{H}_0^{\perp}}(z) = \|T|_{\mathcal{H}_0^{\perp}}\|z$. But this means that $z\notin \mathcal{H}_0$ is an eigenvector of $T$ which contradicts the maximality of the set $\mathcal{B} = \{v_\alpha:\alpha\in \Lambda\}$ of $T$ and we conclude that $\mathcal{H}_0^{\perp} = \{0\}.$ This completes the proof.
\end{proof} 
Let $\mathcal{H}$, $\mathcal{K}$ be Hilbert spaces with $w\in \mathcal{H}$ and $v\in \mathcal{K}$. $v\otimes w$ then denotes the operator from $\mathcal{H}$ to $\mathcal{K}$ defined as:
$(v\otimes w)x= \inner{x}{w}v$ for every $x\in \mathcal{H}$.

\begin{cor}\label{ST}
If $T\in \mathcal{B}(\mathcal{H})$ is a positive $\mathcal{AN}$ operator, then 
$$
T = \sum_{\alpha \in \Lambda}\beta_\alpha v_\alpha \otimes v_\alpha ,
$$
where $\{v_\alpha: \alpha\in \Lambda\}$ is an orthonormal basis consisting entirely of eigenvectors of $T$ and for every $\alpha \in \Lambda$, $Tv_\alpha = \beta_\alpha v_\alpha$ with $\beta_\alpha \geqslant 0.$ Moreover, for every nonempty subset $\Gamma\subseteq\Lambda$ of $\Lambda$, we have $\sup\{\beta_\alpha:\alpha\in \Gamma\}=\max\{\beta_\alpha:\alpha\in \Gamma\}.$
\end{cor}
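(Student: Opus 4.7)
The plan is to leverage the preceding theorem (orthonormal basis of eigenvectors) together with the $\mathcal{AN}$ hypothesis and Theorem \ref{!}. The series representation is essentially a restatement; the substantive content is the final $\sup = \max$ assertion, which I would extract by applying the $\mathcal{AN}$ condition to a suitable spectral subspace.

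First I would invoke the preceding theorem to fix an orthonormal basis $\{v_\alpha : \alpha \in \Lambda\}$ of $\mathcal{H}$ consisting of eigenvectors of $T$, with $T v_\alpha = \beta_\alpha v_\alpha$. Positivity of $T$ forces $\beta_\alpha = \langle T v_\alpha, v_\alpha\rangle \geq 0$. For any $v \in \mathcal{H}$, the Parseval expansion $v = \sum_\alpha \langle v, v_\alpha\rangle v_\alpha$ combined with boundedness of $T$ (as already used in the previous proof) gives $Tv = \sum_\alpha \beta_\alpha \langle v, v_\alpha\rangle v_\alpha$, which is exactly $\bigl(\sum_\alpha \beta_\alpha v_\alpha \otimes v_\alpha\bigr)v$ in the strong operator topology. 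This yields the displayed formula.

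For the $\sup = \max$ claim, given a nonempty $\Gamma \subseteq \Lambda$, let $\mathcal{M}_\Gamma := \overline{\mathrm{span}}\{v_\alpha : \alpha \in \Gamma\}$, a nontrivial closed subspace. Since $T v_\alpha = \beta_\alpha v_\alpha \in \mathcal{M}_\Gamma$ for each $\alpha \in \Gamma$, the subspace $\mathcal{M}_\Gamma$ is invariant under $T$, and the restriction $T|_{\mathcal{M}_\Gamma}$ is a positive operator on $\mathcal{M}_\Gamma$. A quick Parseval computation shows $\|T|_{\mathcal{M}_\Gamma}\| = \sup\{\beta_\alpha : \alpha \in \Gamma\}$. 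The $\mathcal{AN}$ hypothesis tells us $T|_{\mathcal{M}_\Gamma}$ satisfies $\mathcal{N}$, and Theorem \ref{!} then promotes this to the statement that $\|T|_{\mathcal{M}_\Gamma}\|$ is an eigenvalue of $T|_{\mathcal{M}_\Gamma}$.

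The remaining step, which I expect to be the only place requiring genuine argument, is identifying this eigenvalue with some $\beta_\alpha$. If $u$ is a corresponding unit eigenvector, expand $u = \sum_{\alpha \in \Gamma} c_\alpha v_\alpha$; then $Tu = \sum_\alpha \beta_\alpha c_\alpha v_\alpha$ must equal $\mu u = \sum_\alpha \mu c_\alpha v_\alpha$ with $\mu = \|T|_{\mathcal{M}_\Gamma}\|$. Uniqueness of Fourier coefficients forces $(\beta_\alpha - \mu) c_\alpha = 0$ for every $\alpha \in \Gamma$, and since $u \neq 0$ some $c_{\alpha_0} \neq 0$, giving $\beta_{\alpha_0} = \mu = \sup\{\beta_\alpha : \alpha \in \Gamma\}$. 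Thus the supremum is attained, completing the proof.
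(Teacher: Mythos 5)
Your proposal is correct and follows essentially the same route as the paper: both restrict $T$ to the closed span $\mathcal{M}_\Gamma$ of $\{v_\alpha:\alpha\in\Gamma\}$, identify $\|T|_{\mathcal{M}_\Gamma}\|$ with $\sup\{\beta_\alpha:\alpha\in\Gamma\}$ via Parseval, and invoke the $\mathcal{AN}$ hypothesis on that subspace. The only (cosmetic) difference is that you argue directly through Theorem \ref{!} and an eigenvector expansion, whereas the paper assumes the supremum is not attained and derives the strict inequality $\|T|_{\mathcal{M}_\Gamma}(x)\|<\|T|_{\mathcal{M}_\Gamma}\|$ for every unit vector, contradicting property $\mathcal{N}$.
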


\begin{proof}
That $T = \sum_{\alpha \in \Lambda}\beta_\alpha v_\alpha \otimes v_\alpha$
is obvious. 
To prove the final claim we use the method of contradiction and assume on the contrary that 
$\sup\{\beta_\alpha:\alpha\in \Gamma\}\neq\max\{\beta_\alpha:\alpha\in \Gamma\}$ for some nonempty subset $\Gamma\subseteq\Lambda$, that is, the supremum of the set $\{\beta_\alpha:\alpha\in \Gamma\}$ (say $\beta$) is not achieved. In that case, for any $x\in \mathcal{H}_\Gamma$ with $\|x\|=1$,
$$
\|T|_{\mathcal{H}_\Gamma}(x)\|^2 
                                          = \sum_{\alpha \in \Gamma}|\beta_\alpha|^2 |\inner{x}{v_\alpha}|^2
                                          < \sum_{\alpha \in \Gamma}\beta^2 |\inner{x}{v_\alpha}|^2
                                          = \beta^2 =  \|T|_{\mathcal{H}_\Gamma}\|^2.                                   $$
This implies that $\|T|_{\mathcal{H}_\Gamma}(x)\|<\|T|_{\mathcal{H}_\Gamma}\|$ for every $x\in \mathcal{H}_\Gamma$ with $\|x\|=1$ which contradicts the fact that $T$ is an $\mathcal{AN}$ operator. This proves the assertion.
\end{proof}

The spectral conditions given in the above corollary do not characterize $\mathcal{AN}$ operators as the following example and result show.

\begin{exam}
Let $K_1,K_2$ be positive compact operators that are not of finite rank on the complex Hilbert space $l^2$, and $0\leqslant a<b.$ Consider the operator 
$$
T=\begin{bmatrix}
 aI+K_1 & 0 \\
  0 & bI+K_2
\end{bmatrix} \in \mathcal{B}(l^2\oplus l^2).
$$
Then the supremum of each subset of the spectrum is equal to the maximum of that subset since the spectrum of $T$ consists of the closure of the union of two decreasing sequences, $\{ a_n \}$ and $\{ b_n \}$, with $\lim_n a_n =a$ and $\lim_n b_n =b.$
However, the spectrum of $T$ has two limit points, and so by the following result $T$ is not $\mathcal{AN}.$ Thus, the spectral condition given by the above corollary does not characterize positive $\mathcal{AN}$ operators.
\end{exam}

\begin{prop}
If $T\in \mathcal{B}(\mathcal{H})$ is a positive $\mathcal{AN}$ operator, then the spectrum $\sigma(T)$ of $T$ has at most one limit point. Moreover, this unique limit point (if it exists) can only be the limit of a decreasing sequence in the spectrum.
\end{prop}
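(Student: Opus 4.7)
The plan is to use Corollary \ref{ST} as the workhorse. Since $T$ is a positive $\mathcal{AN}$ operator, that corollary supplies an orthonormal eigenbasis $\{v_\alpha : \alpha \in \Lambda\}$ with eigenvalues $\beta_\alpha \geq 0$, and (most usefully) the property that for every nonempty $\Gamma \subseteq \Lambda$, the supremum of $\{\beta_\alpha : \alpha \in \Gamma\}$ is attained. Because $T$ is diagonal in this basis, $\sigma(T) = \overline{\{\beta_\alpha : \alpha \in \Lambda\}}$, so every limit point of $\sigma(T)$ is the limit of a sequence of distinct eigenvalues.

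I would first dispose of the ``moreover'' clause, since it follows directly from Corollary \ref{ST}. If $c$ is a limit point of $\sigma(T)$, pick any sequence of distinct eigenvalues converging to $c$ and extract a strictly monotone subsequence. A strictly increasing such subsequence would yield a set $\Gamma$ with $\sup_{\alpha \in \Gamma} \beta_\alpha = c$ not attained, contradicting Corollary \ref{ST}. Hence the subsequence must be strictly decreasing, exhibiting $c$ as the limit of a decreasing sequence in $\sigma(T)$.

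For the uniqueness of the limit point I will argue by contradiction: suppose $c < d$ are both limit points of $\sigma(T)$. By the previous step, select strictly decreasing sequences $\beta_n \downarrow c$ and $\gamma_n \downarrow d$ of eigenvalues with $\beta_n > c$, $\gamma_n > d$; passing to tails, assume $\beta_n < (c+d)/2 < \gamma_n$ for every $n$, which forces the corresponding unit eigenvectors $\{v_n\} \cup \{w_n\}$ to form an orthonormal system. I then fabricate a closed subspace $\mathcal{M}$ on which $T|_{\mathcal{M}}$ fails to be norming. Concretely, set $f_n = \cos\theta_n\, v_n + \sin\theta_n\, w_n$ with $\theta_n \in (0, \pi/2)$ chosen so that $\|T f_n\|^2 = \cos^2\theta_n\, \beta_n^2 + \sin^2\theta_n\, \gamma_n^2 = d^2 - 1/n$; this is solvable for all sufficiently large $n$ because $\beta_n^2 \to c^2 < d^2$ and $\gamma_n^2 > d^2$. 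The family $\{f_n\}$ is orthonormal, and the vectors $T f_n$ are pairwise orthogonal, so a Pythagorean computation gives $\|T(\sum a_n f_n)\|^2 = \sum |a_n|^2 (d^2 - 1/n) = d^2 - \sum |a_n|^2/n < d^2$ for every unit $x = \sum a_n f_n \in \mathcal{M} := \overline{\text{span}}\{f_n\}$, while $\|T f_n\| \to d$ forces $\|T|_{\mathcal{M}}\| = d$. Thus $T|_{\mathcal{M}}$ is not norming, contradicting $\mathcal{AN}$.

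The main obstacle is this last construction. The example immediately preceding the proposition shows that Corollary \ref{ST} alone cannot detect the failure of $\mathcal{AN}$ when two limit points are present, since the diagonal ``sup equals max'' condition is preserved. One must leave the eigenbasis and exercise the $\mathcal{AN}$ property on a subspace $\mathcal{M}$ that interleaves eigenvectors from the two clusters via the angular parameter $\theta_n$; the delicate point is to arrange $\|T f_n\|$ to climb strictly toward $d$ without ever reaching it, which is exactly what the two limit point hypothesis permits.
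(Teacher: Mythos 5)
Your proposal is correct and follows essentially the same route as the paper: the ``moreover'' clause comes from the sup-equals-max property of Corollary \ref{ST} (which you invoke directly rather than re-running the subspace construction), and uniqueness is obtained by building the interleaved subspace spanned by $\cos\theta_n\,v_n+\sin\theta_n\,w_n$ with the mixing angles tuned so that $\|Tf_n\|$ increases strictly to $d$ without attaining it --- exactly the paper's convex-combination construction, with $d^2-1/n$ playing the role of its target sequence $\gamma_n^2$.
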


\begin{proof}
By the Corollary \ref{ST},  we know that 
$$
T = \sum_{\alpha \in \Lambda}\beta_\alpha v_\alpha \otimes v_\alpha,
$$
where $\{v_\alpha: \alpha\in \Lambda\}$ is an orthonormal basis consisting entirely of eigenvectors of $T$ and for every $\alpha \in \Lambda, Tv_\alpha = \beta_\alpha v_\alpha$ with $\beta_\alpha \geqslant 0$. All that remains is to show that  the spectrum $\sigma(T)$, which is precisely the closure of $\{\beta_\alpha\}_{\alpha\in \Lambda}$ has at most one limit point and this unique limit point (if it exists) can only be the limit of a decreasing sequence in the spectrum.

 First we show that whenever $\lambda$ is a limit point of the spectrum $\sigma(T)$ of $T$, then there exists a decreasing sequence $(\lambda_n)_{n\in \mathbb{N}}\subseteq \{\beta_\alpha:\alpha \in \Lambda\}$ such that $\lambda_n\searrow \lambda$. 
To see this, it is sufficient to prove that there are at most only finitely many terms of the sequence of $(\lambda_n)_{n\in \mathbb{N}}$ that are strictly less than $\lambda$;
for if there are infinitely many such terms, then there exists an increasing subsequence  $(\lambda_{n_k})$ such that 
$\lambda_{n_k} \nearrow \lambda$ and for each $n_k\in \mathbb{N}$, $\lambda_{n_k}<\lambda$.  But then if we define $\mathcal{M}_0:=\text{clos}(\text{span}\{v_{n_k}\})$, where $v_{n_k}$'s are the eigenvectors corresponding to 
the eigenvalues $\lambda_{n_k},$ then it is a trivial observation that 
$\|T|_{\mathcal{M}_0}\| = \sup\{|\lambda_{n_k}|\}=\lambda.$ However, for every 
$x= \sum_{n_k}\alpha_{n_k}v_{n_k}\in \mathcal{M}_0$ with 
$\sum_{n_k}|\alpha_{n_k}|^2=1$ so that $\|x\|=1$, 
$$
\|T|_{\mathcal{M}_0}(x)\|^2 = \|\sum_{n_k}\alpha_{n_k}\lambda_{n_k}v_{n_k}\|^2=
\sum_{n_k}|\alpha_{n_k}|^2|\lambda_{n_k}|^2<\lambda^2 \sum_{n_k}|\alpha_{n_k}|^2 =\lambda^2,
$$ 
so that  $\|T|_{\mathcal{M}_0}(x)\| < \lambda \leqslant \|T|_{\mathcal{M}_0}\|$. This contradicts the 
fact that $T$ is an $\mathcal{AN}$ operator. This proves our first claim.

We next prove, by the method of contradiction, that the spectrum $\sigma(T)$ of $T$ has at most one limit point. Suppose on the contrary that the spectrum 
$\sigma(T)= \text{clos}(\{\beta_\alpha\}_{\alpha\in \Lambda})$ has two limit points 
$a<b.$ By the discussion in the above paragraph, there exist decreasing sequences 
$(a_n)_{n\in \mathbb{N}}\subseteq \{\beta_\alpha\}_{\alpha\in \Lambda}$ and $(b_n)_{n\in \mathbb{N}}\subseteq\{\beta_\alpha\}_{\alpha\in \Lambda}$ such that 
$a_n\searrow a$ and $b_n\searrow b.$ Let us rename and denote by $\{f_n\}$ and $\{g_n\}$ the eigenvectors corresponding to the eigenvalues $\{a_n\}$ and $\{b_n\}$ respectively. Without any loss of generality we may assume that $a_1<b$ so that $a_n<b_n$ for each $n\in \mathbb{N}$. (For if it happens otherwise then we can choose a natural number $m$ such that $a_m<b$ and redefine the sequence $(a_n)_{n=m}^\infty$ by $(\tilde{a}_n)_{n=1}^\infty$.) Also note that 
$Tf_n = a_nf_n$ and $Tg_n=b_ng_n$ for each $n\in \mathbb{N}.$
Define 
$$
\mathcal{M}:=\text{clos}\left(\text{span}\left\{c_nf_n + \sqrt{1-c_n^2}g_n: n\in \mathbb{N}\right\}\right),
$$
where $c_n^2\in [0,1]$ are yet to be determined.
Needless to say that $\mathcal{M}$ is a closed subspace of $\mathcal{H}$ and hence a Hilbert space in its own right. Moreover, it is a trivial observation that
the set $\left\{e_n: n\in \mathbb{N}\right\}$, where $e_n:=c_nf_n + \sqrt{1-c_n^2}g_n$ serves to be an orthonormal basis of $\mathcal{M}.$
Then,
\begin{multline*}
\|T|_{\mathcal{M}}\|^2 \geqslant \sup\{\|Te_n\|^2\}
  =\sup\{\|T(c_nf_n + \sqrt{1-c_n^2}g_n)\|^2:n\in \mathbb{N}\}\\
  =\sup\{\|c_na_nf_n + \sqrt{1-c_n^2}b_ng_n\|^2:n\in \mathbb{N}\}
  =\sup\{c_n^2a_n^2 + (1-c_n^2)b_n^2: n\in \mathbb{N}\}.
\end{multline*}
At this point we define a sequence $(\gamma_n)_{n\in \mathbb{N}}$ by 
$$
\gamma_n:=b+\frac{a_1-b}{2n}; n\in \mathbb{N}. 
$$
Then, $(\gamma_n)_{n\in \mathbb{N}}$ is a strictly increasing sequence such that for every $n\in \mathbb{N}$, $a_1^2<\gamma_n^2<b^2$ and 
$\lim_{n\rightarrow \infty} \gamma_n =\sup\{\gamma_n:n\in \mathbb{N}\}= b.$ Notice that $c_n^2a_n^2 + (1-c_n^2)b_n^2$ is a convex combination of $a_n^2$ and $b_n^2$, and hence it follows that $ c_n^2a_n^2 + (1-c_n^2)b_n^2 \in [a_n^2,b_n^2]$ for each $n\in \mathbb{N}$. In fact, by choosing the right value of $c_n^2\in [0,1], c_n^2a_n^2 + (1-c_n^2)b_n^2$ can give any point in the interval $[a_n^2,b_n^2]$. Let us then choose a sequence $(c_n)_{n\in \mathbb{N}}$ such that $c_n^2a_n^2 + (1-c_n^2)b_n^2=\gamma_n^2$.
This yields
$$
\|T|_{\mathcal{M}}\|^2 \geqslant \sup\{c_n^2a_n^2 + (1-c_n^2)b_n^2: n\in \mathbb{N}\}
 = \sup\{\gamma_n^2:n\in \mathbb{N}\} = b^2.
$$
However, any $x\in \mathcal{M}$ with $\|x\|=1$ can be written as 
$$
\sum_{n=1}^\infty \alpha_n (c_nf_n+ \sqrt{1-c_n^2}g_n), \text{ with } \sum_{n=1}^\infty|\alpha_n|^2=1,
$$
in which case,
\begin{multline*}
\|T|_{\mathcal{M}}(x)\|^2 =\|Tx\|^2
                             =\left\|T\left(\sum_{n=1}^\infty \alpha_n (c_nf_n+ \sqrt{1-c_n^2}g_n)\right)\right\|^2\\
                             = \sum_{n=1}^\infty |\alpha_n|^2(c_n^2a_n^2 + (1-c_n^2)b_n^2)
                             = \sum_{n=1}^\infty |\alpha_n|^2\gamma_n^2
                              < \sum_{n=1}^\infty |\alpha_n|^2b^2
                             =b^2.
\end{multline*} 
This implies that for every element $x\in \mathcal{M}$ with $\|x\|=1$, $\|T|_\mathcal{M}(x)\| < b \leqslant \|T|_\mathcal{M}\|$ which means that $T$ does not satisfy the property $\mathcal{AN}.$ So we arrive at a contradiction. Hence, our hypothesis is wrong and we conclude that the spectrum of $T$ can have at most one limit point. This completes the proof.
\end{proof}
We now use this as a tool to prove the following result.
\begin{cor}
If $T\in \mathcal{B}(\mathcal{H})$ is a positive $\mathcal{AN}$ operator, then the set $\{\beta_\alpha\}_{\alpha\in \Lambda}$ of distinct eigenvalues of $T$, that is, without counting multiplicities, is countable.
\end{cor}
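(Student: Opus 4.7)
The plan is to leverage the preceding proposition, which shows that $\sigma(T)$ has at most one limit point, and then invoke a standard topological fact about bounded subsets of $\mathbb{R}$ whose derived set is small. Write $E := \{\beta_\alpha : \alpha \in \Lambda\}$ for the set of distinct eigenvalues (as a set, with duplicates removed). Since $T$ is positive with norm $\|T\|$, every $\beta_\alpha$ lies in $[0, \|T\|]$, so $E$ is a bounded subset of $\mathbb{R}$. Moreover $E \subseteq \sigma(T)$, so each limit point of $E$ is a limit point of $\sigma(T)$; by the previous proposition, $E$ therefore has at most one limit point.

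Next I would reduce to a standard finite/countable argument. If $E$ has no limit points in $\mathbb{R}$, then Bolzano--Weierstrass applied to the bounded set $E$ forces $E$ to be finite, and we are done. If $E$ has a unique limit point $\lambda$, then for each $n \in \mathbb{N}$ set
\[
E_n := \{\mu \in E : |\mu - \lambda| \geq 1/n\}.
\]
Each $E_n$ is bounded and has no limit point: any limit point of $E_n$ would be a limit point of $E$ lying outside $(\lambda - 1/n, \lambda + 1/n)$, hence different from $\lambda$, contradicting uniqueness. Therefore each $E_n$ is finite, and since
\[
E \setminus \{\lambda\} = \bigcup_{n \in \mathbb{N}} E_n,
\]
$E$ is a countable union of finite sets together with at most the singleton $\{\lambda\}$, and hence countable.

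There is no real obstacle here; the work has already been done in the proposition on limit points of the spectrum. The only point requiring any care is to observe that the set of distinct values $\{\beta_\alpha\}$, rather than the indexed family over a possibly uncountable $\Lambda$, is what the argument controls, and that its derived set is indeed bounded above by the derived set of $\sigma(T)$.
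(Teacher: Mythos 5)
Your proof is correct and follows essentially the same route as the paper: both reduce the corollary to the fact that a bounded subset of $\mathbb{R}$ with at most one limit point must be countable, using the preceding proposition on limit points of $\sigma(T)$. The only difference is that the paper simply quotes the contrapositive of ``every uncountable subset of $\mathbb{R}$ has at least two limit points,'' whereas you supply the elementary $E_n$-decomposition argument proving it; this is a harmless (indeed welcome) filling-in of detail, not a different method.
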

\begin{proof}
This corollary is a direct consequence of the fact that 
\textit{If $E\subseteq \mathbb{R}$ is an uncountable subset, then $E$ has at least two limit points.} Since the set $\{\beta_\alpha\}_{\alpha\in \Lambda}$
has at most one limit point, by the contrapositive of the above fact, it is countable.
\end{proof}

\begin{cor}\label{oneinfmult}
If $T\in \mathcal{B}(\mathcal{H})$ is a positive $\mathcal{AN}$ operator, then the set $\{\beta_\alpha\}_{\alpha\in \Lambda}$ of eigenvalues of $T$ has at most one eigenvalue with infinite multiplicity.
\end{cor}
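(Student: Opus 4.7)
The plan is to mirror the construction used in the previous proposition (the limit-point argument), replacing the two decreasing sequences at two limit points with two infinite orthonormal families of eigenvectors coming from two infinite-multiplicity eigenvalues. I would argue by contradiction: suppose there exist two distinct nonnegative eigenvalues $a < b$ of $T$, each with infinite multiplicity. Since $T$ is self-adjoint, eigenspaces for distinct eigenvalues are orthogonal, so I can choose an infinite orthonormal sequence $\{f_n\}_{n \in \mathbb{N}}$ of eigenvectors of $T$ for $a$ and an infinite orthonormal sequence $\{g_n\}_{n \in \mathbb{N}}$ of eigenvectors for $b$, with $\{f_n\} \cup \{g_n\}$ orthonormal.

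Next I would construct a closed subspace on which $T$ fails to attain its norm. Pick a strictly increasing sequence $(\gamma_n)_{n \in \mathbb{N}}$ with $a < \gamma_n < b$ and $\gamma_n \nearrow b$ (for instance $\gamma_n := b - (b-a)/(n+1)$), and select $c_n \in (0, 1)$ with $c_n^2 a^2 + (1 - c_n^2) b^2 = \gamma_n^2$; this is possible because the convex combination ranges over $[a^2, b^2]$. Set $e_n := c_n f_n + \sqrt{1 - c_n^2}\, g_n$ and $\mathcal{M} := \text{clos}(\text{span}\{e_n : n \in \mathbb{N}\})$. Then $\{e_n\}$ is orthonormal and $\mathcal{M}$ is a nontrivial closed subspace of $\mathcal{H}$. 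A direct computation gives $T e_n = c_n a f_n + \sqrt{1 - c_n^2}\, b\, g_n$, hence $\|T e_n\|^2 = \gamma_n^2$, and in particular $\|T|_\mathcal{M}\| \geq \sup_n \|T e_n\| = b$.

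The crucial observation for finishing is that $\{T e_n\}$ is an orthogonal family, since each $T e_n$ lies in $\text{span}\{f_n, g_n\}$ and these two-dimensional subspaces are pairwise orthogonal. Hence for any unit vector $x = \sum_n \alpha_n e_n \in \mathcal{M}$ (with $\sum_n |\alpha_n|^2 = 1$), the image $T x$ decomposes orthogonally as $\sum_n \alpha_n T e_n$ and so
\[
\|T x\|^2 = \sum_n |\alpha_n|^2 \gamma_n^2 < \sum_n |\alpha_n|^2 b^2 = b^2 \leq \|T|_\mathcal{M}\|^2,
\]
with the strict inequality holding because $\gamma_n < b$ for every $n$ and at least one $\alpha_n$ is nonzero. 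This would show $T|_\mathcal{M}$ cannot attain its norm, contradicting the $\mathcal{AN}$ hypothesis.

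I do not anticipate a serious obstacle. The argument is essentially a streamlined version of the earlier two-limit-point proof: there one had to extract decreasing subsequences of eigenvalues converging to two putative limit points and then mix their eigenvectors, whereas here the two eigenvalues sit exactly at $a$ and $b$ with infinite-dimensional eigenspaces already available, so no subsequence extraction or reindexing is needed. The only points that require a bit of care are the pairwise orthogonality of $\{T e_n\}$ (which uses self-adjointness of $T$) and the strictness of the final inequality.
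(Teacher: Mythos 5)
Your proposal is correct and follows essentially the same route as the paper's own proof: assume two distinct nonnegative eigenvalues $a<b$ of infinite multiplicity, mix their eigenvectors as $e_n=c_nf_n+\sqrt{1-c_n^2}\,g_n$ with $c_n$ chosen so that $\|Te_n\|^2=\gamma_n^2$ for a strictly increasing sequence $\gamma_n\nearrow b$, and conclude that $\|T|_{\mathcal M}\|=b$ is never attained on $\mathcal M$. Your explicit remark that the $Te_n$ are pairwise orthogonal (justifying $\|Tx\|^2=\sum_n|\alpha_n|^2\gamma_n^2$) is a point the paper uses implicitly; otherwise the arguments coincide.
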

\begin{proof}
To show that this set has at most one eigenvalue with infinite multiplicity, we assume that it has two distinct eigenvalues $\beta_1$ and $\beta_2$ with infinite multiplicity, and we deduce a contradiction from the assumption. Without loss of generality, we assume that $0\leq\beta_1<\beta_2.$ Now let $(a_n)_{n\in \mathbb{N}}\subseteq \{\beta_\alpha\}_{\alpha\in \Lambda}$  and $(b_n)_{n\in \mathbb{N}}\subseteq\{\beta_\alpha\}_{\alpha\in \Lambda}$  be two sequences such that for every $n\in \mathbb{N}$, we have $a_n=\beta_1$ and $b_n=\beta_2$. Clearly then $a_n\longrightarrow \beta_1$ and $b_n\longrightarrow \beta_2.$ Let us, like in the previous proof, rename and denote by $\{f_n\}$ and $\{g_n\}$ the eigenvectors corresponding to the eigenvalues $\{a_n\}$ and $\{b_n\}$ respectively where $Tf_n = a_nf_n= \beta_1 f_n$ and $Tg_n=b_ng_n=\beta_2 g_n$ for each $n\in \mathbb{N}.$
At this point we define a sequence $(\gamma_n)_{n\in \mathbb{N}}$ by
$$
\gamma_n:=\beta_2+\frac{\beta_1-\beta_2}{2n}; n\in \mathbb{N}.
$$
That $(\gamma_n)_{n\in \mathbb{N}}$ is a strictly increasing sequence with $\beta_1^2<\gamma_n^2<\beta_2^2$ for every $n\in \mathbb{N}$ such that $\lim_{n\rightarrow \infty} \gamma_n =\sup\{\gamma_n:n\in \mathbb{N} \}= \beta_2$ is obvious. Let $c_n^2\in [0,1]$ be arbitrary, then since $c_n^2\beta_1^2 + (1-c_n^2)\beta_2^2$ is a convex linear combination of $\beta_1^2$ and $\beta_2^2$, it follows that for each $n\in \mathbb{N}$, we have 
$c_n^2\beta_1^2 + (1-c_n^2)\beta_2^2\in [\beta_1^2,\beta_2^2]$. In fact, by choosing the right value of $c_n^2\in [0,1]$, $c_n^2\beta_1^2 + (1-c_n^2)\beta_2^2$ gives any desired point in  the interval $[\beta_1^2,\beta_2^2]$. 
This observation, together with the fact that $\beta_1^2<\gamma_n^2<\beta_2^2$ for every $n\in \mathbb{N}$, allows us to define the sequence $(c_n)_{n\in \mathbb{N}}$ concretely, which is as follows:\\ for each $n\in \mathbb{N}$, choose $c_n$ so that 
$c_n^2\beta_1^2 + (1-c_n^2)\beta_2^2=\gamma_n^2$.\\
We will use this so defined sequence $(c_n)_{n\in \mathbb{N}}$ as a tool to define a closed subspace $\mathcal{M}$ of $\mathcal{H}$ by 
$$
\mathcal{M}:=\text{clos}\left(\text{span}\left\{c_nf_n + \sqrt{1-c_n^2}g_n: n\in \mathbb{N}\right\}\right).
$$
It is easy to see that the set $\left\{e_n: n\in \mathbb{N}\right\}$ serves to be an orthonormal basis of $\mathcal{M}$, where $e_n:=c_nf_n + \sqrt{1-c_n^2}g_n$. It now follows that
\begin{multline*}
\|T|_{\mathcal{M}}\|^2 \geqslant \sup\{\|Te_n\|^2\}
  =\sup\{\|c_n\beta_1f_n + \sqrt{1-c_n^2}\beta_2g_n\|^2:n\in \mathbb{N}\}\\
  =\sup\{c_n^2\beta_1^2 + (1-c_n^2)\beta_2^2: n\in \mathbb{N}\}
  =\sup\{\gamma_n^2:n\in \mathbb{N}\} = \beta_2^2.
\end{multline*}
However, any $x\in \mathcal{M}$ with $\|x\|=1$ can be written as 
$$
\sum_{n=1}^\infty \alpha_n (c_nf_n+ \sqrt{1-c_n^2}g_n) \text{ with } \sum_{n=1}^\infty|\alpha_n|^2=1.
$$
In that case,
\begin{align*}
\|T|_{\mathcal{M}}(x)\|^2 &=\|Tx\|^2=\left\|T\left(\sum_{n=1}^\infty \alpha_n (c_nf_n+ \sqrt{1-c_n^2}g_n)\right)\right\|^2\\
                              &=\left\|\sum_{n=1}^\infty \alpha_n (c_n\beta_1f_n+ \sqrt{1-c_n^2}\beta_2g_n)\right\|^2\\ 
                             &= \sum_{n=1}^\infty |\alpha_n|^2(c_n^2\beta_1^2 + (1-c_n^2)\beta_2^2)\\
                             &= \sum_{n=1}^\infty |\alpha_n|^2\gamma_n^2 < \sum_{n=1}^\infty |\alpha_n|^2\beta_2^2 =\beta_2^2.
\end{align*} 
This implies that for every element $x\in \mathcal{M}$ with $\|x\|=1$, $\|T|_\mathcal{M}(x)\| < \beta_2 \leqslant \|T|_\mathcal{M}\|$ which means that $T$ does not satisfy the property $\mathcal{AN}.$ So we arrive at a contradiction. Hence, our hypothesis was wrong and we conclude that the spectrum of $T$ can have at most one eigenvalue with infinite multiplicity. This completes the proof.
\end{proof}

\begin{cor}
Let $T\in \mathcal{B}(\mathcal{H})$ be a positive $\mathcal{AN}$ operator. If the spectrum $\sigma(T)=\text{clos}\{\beta_\alpha:\alpha\in \Lambda\}$ of $T$ has both a limit point $\beta$ and an eigenvalue $\hat \beta $ with infinite multiplicity, then $\beta=\hat \beta.$
\end{cor}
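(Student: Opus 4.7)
The plan is to argue by contradiction, mirroring the subspace constructions from the preceding proposition (on limit points) and Corollary \ref{oneinfmult} (on infinite multiplicity). Suppose $\beta \neq \hat{\beta}$. Without loss of generality I treat the case $\hat{\beta} > \beta$; the case $\hat{\beta} < \beta$ is entirely symmetric, one simply interchanges the roles of $\gamma_n \nearrow \hat\beta$ and $\gamma_n \nearrow \beta$ in what follows.

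Since $\beta$ is a limit point of $\sigma(T)$, the proposition above produces a strictly decreasing sequence of distinct eigenvalues $b_n \searrow \beta$ with corresponding unit eigenvectors $\{f_n\}$; by dropping finitely many initial terms I may assume $b_n < \hat{\beta}$ for every $n$. Since $\hat{\beta}$ has infinite multiplicity, I pick a countable orthonormal sequence $\{g_n\}$ inside the $\hat{\beta}$-eigenspace, automatically orthogonal to every $f_n$. I then set
\[\gamma_n := \hat{\beta} - \frac{\hat{\beta} - \beta}{2n},\]
which is strictly increasing to $\hat{\beta}$ and satisfies $\gamma_n > (\hat\beta+\beta)/2 > \beta$, so after discarding finitely many more terms I may assume $b_n < \gamma_n < \hat{\beta}$ for all $n$. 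As $c^2 b_n^2 + (1-c^2)\hat{\beta}^2$ sweeps $[b_n^2,\hat{\beta}^2]$ while $c$ runs over $[0,1]$, I can pick $c_n \in [0,1]$ so that $c_n^2 b_n^2 + (1-c_n^2)\hat{\beta}^2 = \gamma_n^2$. Finally I define $e_n := c_n f_n + \sqrt{1-c_n^2}\,g_n$ and the closed subspace $\mathcal{M} := \mathrm{clos}(\mathrm{span}\{e_n : n \in \mathbb{N}\})$; a routine check shows $\{e_n\}$ is an orthonormal basis for $\mathcal{M}$.

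Testing on basis vectors yields $\|T|_{\mathcal{M}}\|^2 \geq \sup_n \|Te_n\|^2 = \sup_n \gamma_n^2 = \hat{\beta}^2$, while for any unit $x = \sum_n \alpha_n e_n \in \mathcal{M}$ the orthogonality of the $f_n, g_m$ gives
\[\|T|_{\mathcal{M}}(x)\|^2 = \sum_n |\alpha_n|^2\bigl(c_n^2 b_n^2 + (1-c_n^2)\hat{\beta}^2\bigr) = \sum_n |\alpha_n|^2 \gamma_n^2 < \hat{\beta}^2,\]
with strict inequality because $\gamma_n < \hat{\beta}$ for every $n$. Hence $\|T|_{\mathcal{M}}\| = \hat{\beta}$ is never attained on the unit sphere of $\mathcal{M}$, contradicting that $T$ satisfies $\mathcal{AN}$.

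The main obstacle is purely bookkeeping: choosing the sequence $\gamma_n$ strictly inside $[b_n, \hat{\beta}]$ so that the convex combination defining $c_n$ is legitimate, keeping the $\{e_n\}$ genuinely orthonormal (which uses orthogonality across distinct eigenspaces together with orthonormality inside the $\hat{\beta}$-eigenspace), and preserving strict inequality $\gamma_n < \hat{\beta}$ so the supremum is never attained. The analytic content — the failure of $\mathcal{AN}$ on $\mathcal{M}$ — is essentially imported verbatim from the proof of Corollary \ref{oneinfmult}, replacing one constant sequence of eigenvalues by the decreasing sequence $(b_n)$.
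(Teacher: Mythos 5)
Your proposal is correct and follows essentially the same route as the paper: in both, one takes a decreasing sequence of eigenvalues $b_n\searrow\beta$ together with an orthonormal sequence in the $\hat\beta$-eigenspace, forms the twisted vectors $e_n=c_nf_n+\sqrt{1-c_n^2}\,g_n$ with $\|Te_n\|^2=\gamma_n^2$ increasing strictly to the larger of the two values, and concludes that the norm of $T$ restricted to $\mathcal{M}=\mathrm{clos}(\mathrm{span}\{e_n\})$ is never attained. The only differences are cosmetic --- you are slightly more careful in arranging $b_n<\gamma_n<\hat\beta$ by discarding finitely many terms, and you dispatch the case $\hat\beta<\beta$ by symmetry where the paper writes it out in full.
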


\begin{proof}
To show that $\beta=\hat \beta$, we assume that $\beta\neq\hat \beta$, and we deduce a contradiction from the assumption. We first consider the case when $\beta<\hat \beta$. Because $\beta$ is a limit point of the spectrum, we know that there exists a decreasing sequence $(a_n)_{n\in \mathbb{N}}\subseteq \{\beta_\alpha\}_{\alpha\in \Lambda}$ such that $a_n\searrow \beta$. Let $(b_n)_{n\in \mathbb{N}}\subseteq\{\beta_\alpha\}_{\alpha\in \Lambda}$  be the constant sequence whose each term is $\hat\beta $ so that $b_n\longrightarrow \hat \beta.$  Without any loss of generality we may assume that $a_1<\hat\beta$ so that $a_n<b_n$ for each $n\in \mathbb{N}$. Next we rename and denote by $\{f_n\}$ and $\{g_n\}$ the eigenvectors corresponding to the eigenvalues $\{a_n\}$ and $\{b_n\}$ respectively where $Tf_n = a_nf_n$ and $Tg_n=b_ng_n=\hat\beta g_n$ for each $n\in \mathbb{N}.$

As we did in the previous proof, we define a sequence $(\gamma_n)_n\in \mathbb{N}$ by
$$
\gamma_n:=\hat \beta + \frac{\beta-\hat\beta}{2n}; \text{  }n\in \mathbb{N}.
$$
Observe that $(\gamma_n)_{n\in \mathbb{N}}$ is a strictly increasing sequence with $a_n^2<\gamma_n^2<\hat\beta^2$ for every $n\in \mathbb{N}$. It immediately follows then that $\lim_{n\rightarrow \infty}\gamma_n=\sup\{\gamma_n:n\in \mathbb{N}\}= \hat \beta$. Thereafter for each $n\in \mathbb{N}$, we choose $c_n$ so that $c_n^2\in [0,1]$ and $c_n^2a_n^2 +(1-c_n^2)\hat \beta =\gamma_n^2$. Finally, with the help of this sequence $(c_n)_{n\in \mathbb{N}}$ let us define a closed subspace $\mathcal{M}$ of $\mathcal{H}$ by 
$$
\mathcal{M}:=\text{clos}\left(\text{span}\left\{c_nf_n + \sqrt{1-c_n^2}g_n: n\in \mathbb{N}\right\}\right).
$$
We know that the set $\left\{e_n: n\in \mathbb{N}\right\}$, where $e_n:=c_nf_n + \sqrt{1-c_n^2}g_n$, is an orthonormal basis of $\mathcal{M}.$ It now follows, like the argument in the previous proof, that
\begin{multline*}
\|T|_{\mathcal{M}}\|^2 \geqslant \sup\{\|Te_n\|^2\}=\sup\{\|c_na_nf_n + \sqrt{1-c_n^2}\hat \beta g_n\|^2:n\in \mathbb{N}\}\\
  =\sup\{c_n^2a_n^2 + (1-c_n^2)\hat\beta^2: n\in \mathbb{N}\}
  =\sup\{\gamma_n^2:n\in \mathbb{N}\} = \hat\beta^2.
\end{multline*}
Since each $x\in \mathcal{M}$ with $\|x\|=1$ can be written as 
$$
\sum_{n=1}^\infty \alpha_n (c_nf_n+ \sqrt{1-c_n^2}g_n) \text{ with } \sum_{n=1}^\infty|\alpha_n|^2=1, \text{ it follows that }
$$
\begin{align*}
\|T|_{\mathcal{M}}(x)\|^2 &=\|Tx\|^2=\left\|T\left(\sum_{n=1}^\infty \alpha_n (c_nf_n+ \sqrt{1-c_n^2}g_n)\right)\right\|^2\\
                              &=\left\|\sum_{n=1}^\infty \alpha_n (c_na_nf_n+ \sqrt{1-c_n^2}\hat \beta g_n)\right\|^2\\ 
                             &= \sum_{n=1}^\infty |\alpha_n|^2(c_n^2a_n^2 + (1-c_n^2)\hat\beta ^2)\\
                             &= \sum_{n=1}^\infty |\alpha_n|^2\gamma_n^2 < \sum_{n=1}^\infty |\alpha_n|^2\hat\beta^2 =\hat\beta^2.
\end{align*} 
This implies that for every element $x\in \mathcal{M}$ with $\|x\|=1$, $\|T|_\mathcal{M}(x)\| < \hat\beta \leqslant \|T|_\mathcal{M}\|$ which means that $T$ does not satisfy the property $\mathcal{AN}.$ So we arrive at a contradiction. Hence, our hypothesis was wrong and we conclude that $\beta=\hat \beta$.

To prove the assertion for the case when $\hat \beta < \beta$, we follow the same line of argument. Let  $(a_n)_{n\in \mathbb{N}}\subseteq \{\beta_\alpha\}_{\alpha\in \Lambda}$  be the decreasing sequence such that $a_n\searrow \beta$, $(b_n)_{n\in \mathbb{N}}\subseteq\{\beta_\alpha\}_{\alpha\in \Lambda}$  be the constant sequence whose each term is $\hat\beta $ so that $b_n\longrightarrow \hat \beta$, and rename and denote by $\{f_n\}$ and $\{g_n\}$ the eigenvectors corresponding to the eigenvalues $\{a_n\}$ and $\{b_n\}$ respectively where $Tf_n = a_nf_n$ and $Tg_n=b_ng_n=\hat\beta g_n$ for each $n\in \mathbb{N}.$
We define the sequence $(\gamma_n)_n\in \mathbb{N}$ a bit differently by
$$
\gamma_n:= \beta + \frac{\hat\beta-\beta}{2n}; n\in \mathbb{N}.
$$
It is now a trivial observation that $(\gamma_n)_{n\in \mathbb{N}}$ is a strictly increasing sequence with $\hat\beta^2<\gamma_n^2<a_n^2$ for every $n\in \mathbb{N}$. Consequently, $\lim_{n\rightarrow \infty}\gamma_n=\sup\{\gamma_n:n\in \mathbb{N}\}= \beta$.

 Thereafter for each $n\in \mathbb{N}$, we choose $c_n$ so that $c_n^2\in [0,1]$ and $c_n^2\hat \beta^2 +(1-c_n^2)a_n^2 =\gamma_n^2$. Finally, with the help of this sequence $(c_n)_{n\in \mathbb{N}}$, we define a closed subspace $\mathcal{\hat M}$ of $\mathcal{H}$ by 
$$
\mathcal{\hat M}:=\text{clos}\left(\text{span}\left\{c_ng_n + \sqrt{1-c_n^2}f_n: n\in \mathbb{N}\right\}\right).
$$
That the set $\left\{e_n: n\in \mathbb{N}\right\}$, where $e_n:=c_ng_n + \sqrt{1-c_n^2}gf_n$, is an orthonormal basis of $\mathcal{\hat M}$ can be easily verified. It now follows that
\begin{multline*}
\|T|_{\mathcal{\hat M}}\|^2 \geqslant \sup\{\|Te_n\|^2\}=\sup\{\|c_n\hat \beta g_n + \sqrt{1-c_n^2} a_nf_n\|^2:n\in \mathbb{N}\}\\
  =\sup\{c_n^2\hat \beta^2 + (1-c_n^2)a_n^2: n\in \mathbb{N}\}
  =\sup\{\gamma_n^2:n\in \mathbb{N}\} =\beta^2.
\end{multline*}
Since each $x\in \mathcal{\hat M}$ with $\|x\|=1$ can be written as 
$$
\sum_{n=1}^\infty \alpha_n (c_ng_n+ \sqrt{1-c_n^2}f_n) \text{ with } \sum_{n=1}^\infty|\alpha_n|^2=1, \text { it follows that }
$$
\begin{align*}
\|T|_{\mathcal{\hat M}}(x)\|^2 &=\|Tx\|^2=\left\|T\left(\sum_{n=1}^\infty \alpha_n (c_ng_n+ \sqrt{1-c_n^2}f_n)\right)\right\|^2\\
                             &= \sum_{n=1}^\infty |\alpha_n|^2(c_n^2\hat \beta^2 + (1-c_n^2)  a_n^2)\\
                             &= \sum_{n=1}^\infty |\alpha_n|^2\gamma_n^2 < \sum_{n=1}^\infty |\alpha_n|^2\beta^2 =\beta^2.
\end{align*} 
This implies that for every element $x\in \mathcal{\hat M}$ with $\|x\|=1$, $\|T|_\mathcal{\hat M}(x)\| < \beta \leqslant \|T|_\mathcal{\hat M}\|$ which contradicts the fact that $T$ satisfies the property $\mathcal{AN}$. Thus, we conclude that $\beta=\hat \beta$. This completes the proof.
\end{proof}

We finish this section by stating the final proposition in its full strength.

\begin{thm}\label{NC}
If $T\in \mathcal{B}(\mathcal{H})$ is a positive $\mathcal{AN}$ operator, then 
$$
T = \sum_{\alpha \in \Lambda}\beta_\alpha v_\alpha \otimes v_\alpha,
$$
where $\{v_\alpha: \alpha\in \Lambda\}$ is an orthonormal basis consisting entirely of eigenvectors of $T$ and for every $\alpha \in \Lambda, Tv_\alpha = \beta_\alpha v_\alpha$ with $\beta_\alpha \geqslant 0$ such that \\

$(i)$ for every nonempty subset $\Gamma \subseteq \Lambda$ of $\Lambda$, we have  $\sup\{\beta_\alpha :\alpha \in \Gamma\}=\text{max}\{\beta_\alpha :\alpha \in \Gamma\};$\\

$(ii)$ the spectrum $\sigma(T)=\text{clos }\{\beta_\alpha:\alpha\in \Lambda\}$ of $T$ has at most one limit point. Moreover, this unique limit point (if it exists) can only be the limit of a decreasing sequence in the spectrum;\\

$(iii)$ the set $\{\beta_\alpha\}_{\alpha\in \Lambda}$ of eigenvalues of $T$, without counting multiplicities, is countable and has at most one eigenvalue with infinite multiplicity;\\

$(iv)$ if the spectrum $\sigma(T)=\text{clos}\{\beta_\alpha:\alpha\in \Lambda\}$ of $T$ has both, a limit point $\beta$ and an eigenvalue $\hat \beta $ with infinite multiplicity, then $\beta=\hat \beta.$
\end{thm}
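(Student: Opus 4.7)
The plan is essentially to observe that Theorem \ref{NC} is a consolidation theorem: every one of its clauses has already been proved as a separate result in this section, so the proof reduces to quoting those results in sequence.

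First I would record the spectral-type representation $T = \sum_{\alpha \in \Lambda} \beta_\alpha\, v_\alpha \otimes v_\alpha$. The existence of an orthonormal basis of eigenvectors $\{v_\alpha : \alpha \in \Lambda\}$ is the opening theorem of Section 3, and the expansion of $T$ in this basis together with $\beta_\alpha \ge 0$ (which uses $T \ge 0$) is the content of Corollary \ref{ST}. With the representation in place, clause (i) is precisely the ``moreover'' assertion of Corollary \ref{ST}. Clause (ii) is verbatim the proposition proved by the $(a_n),(b_n),(\gamma_n),(c_n)$ mixed-subspace construction earlier in this section. Clause (iii) splits into two halves: countability of $\{\beta_\alpha\}_{\alpha \in \Lambda}$ is the corollary that invoked the elementary fact that an uncountable subset of $\mathbb{R}$ has at least two limit points, and the at-most-one-eigenvalue-of-infinite-multiplicity half is Corollary \ref{oneinfmult}. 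Finally, clause (iv) is the immediately preceding corollary on the coincidence of a limit point $\beta$ with an infinite-multiplicity eigenvalue $\hat\beta$.

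There is no real obstacle here, since each ingredient is already established; the only care needed is to make sure the quantifiers and hypotheses in the four clauses match those proved in the separate lemmas (in particular, that the representation, the spectrum, and the indexing set $\Lambda$ being used in clauses (i)--(iv) are the same objects produced by the opening theorem and Corollary \ref{ST}). Consequently the written proof is essentially a one-line citation of the preceding results, with perhaps a sentence emphasizing that $\sigma(T) = \mathrm{clos}\{\beta_\alpha : \alpha \in \Lambda\}$ so that ``limit point of $\sigma(T)$'' and ``limit point of the eigenvalue set'' refer to the same thing.
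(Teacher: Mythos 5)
Your proposal is correct and matches the paper exactly: the paper offers no separate proof of Theorem \ref{NC}, introducing it only with the remark that it states ``the final proposition in its full strength,'' i.e., it is precisely the consolidation of the opening theorem of Section 3, Corollary \ref{ST}, the limit-point proposition, the countability corollary, Corollary \ref{oneinfmult}, and the corollary identifying the limit point with the infinite-multiplicity eigenvalue. Your citation-in-sequence argument is exactly the intended justification.
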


\section{Sufficient Conditions For $\mathcal{AN}$ Operators}
We now discuss the sufficient conditions for an operator (not necessarily positive) to satisfy the $\mathcal{AN}$ condition.
There is an important and useful criterion for an operator $T\in \mathcal{B}(\mathcal{H},\mathcal{K})$
to satisfy the property $\mathcal{AN}$, which depends on the following facts: For a closed linear subspace $\mathcal{M}$ of a complex Hilbert space $\mathcal{H}$ let $V_{\mathcal{M}}:\mathcal{M}\longrightarrow \mathcal{H}$ be the inclusion map from $\mathcal{M}$ to $\mathcal{H}$ defined as $V_{\mathcal{M}}(x) = x$ for each $x\in \mathcal{M}$. It is then a trivial observation that the adjoint $V_\mathcal{M}^*:\mathcal{H}\longrightarrow \mathcal{M}$ of $V_\mathcal{M}$ is the orthogonal projection of $\mathcal{H}$ on $\mathcal{M}$ (viewed as a map from $\mathcal{H}$ onto $\mathcal{M}$), that is, $V_\mathcal{M}^*:\mathcal{H}\longrightarrow \mathcal{M}$ such that 
$$
 V_M^*(y)= \begin{cases} 
     y & \text{ if } y\in M, \\
      0 & \text{ if } y\in M^\perp.\\
   \end{cases} 
$$
The criterion referred to is the following: $T$ satisfies the property $\mathcal{AN}$ iff  for every closed linear subspace $\mathcal{M}$ of $\mathcal{H}$, $TV_{\mathcal{M}}$ satisfies the property $\mathcal{N}$.
To prove this assertion we first observe that for any given nontrivial closed subspace $\mathcal{M}$ of $\mathcal{H}$, $\|TV_\mathcal{M}\| = \|T|_\mathcal{M}\|$; for 
\begin{align*}
\|TV_\mathcal{M}\|^2 &=\sup\{\|TV_\mathcal{M}(x)\|^2: \|x\|\leq1, x\in \mathcal{M}\}\\
                & = \sup\{\|Tx\|^2: \|x\|\leq1, x\in \mathcal{M}\}=\|T|_\mathcal{M}\|^2.
\end{align*}
We next assume that $T$ satisfies the property $\mathcal{AN}$ and prove the forward implication. Let $\mathcal{M}$ be an arbitrary nontrivial closed subspace of $\mathcal{H}$. Clearly then there exists $x_0\in \mathcal{M}$ with $\|x_0\|=1$ such that $\|T|_\mathcal{M}\| = \|Tx_0\|$. It follows then that there exists $x_0\in \mathcal{H}$ such that $\|TV_\mathcal{M}\| =\|T|_\mathcal{M}\|=\|Tx_0\| = \|TV_\mathcal{M} (x_0)\|$. Since $\mathcal{M}$ is arbitrary, it follows that $TV_\mathcal{M}$ satisfies the property $\mathcal{N}$.
We complete the proof by showing that $T$ is an $\mathcal{AN}$ operator if $TV_\mathcal{M}$ satisfies the property $\mathcal{N}$ for every nontrivial closed subspace $\mathcal{M}$ of $\mathcal{H}$. Since $TV_\mathcal{M}$ is an $\mathcal{N}$ operator, there exists $x_\mathcal{M}\in \mathcal{H}$(depending on $\mathcal{M}$)  with $\|x_\mathcal{M}\| = 1$ and $\|TV_\mathcal{M}\|=\|TV_\mathcal{M}(x_\mathcal{M})\|$. This means that for every 
$\mathcal{M}$,
$
\|T|_\mathcal{M}\|=\|TV_\mathcal{M}\| = \|TV_\mathcal{M} (x_\mathcal{M})\|=\|T(V_\mathcal{M}x_\mathcal{M})\| =\|Tx_\mathcal{M}\| = \|T|_\mathcal{M}(x_\mathcal{M})\| 
$ 
where $x_\mathcal{M}\in \mathcal{M}$ and $\|x_\mathcal{M}\|=1$.
 This essentially guarantees that for every $\mathcal{M}$, $T|_\mathcal{M}$ achieves its norm on unit sphere and hence satisfies the property $\mathcal{N}$.

We can summarize the result of the above discussion in the following lemma:

\begin{lemma}\label{Trick}
For a closed linear subspace $\mathcal{M}$ of a complex Hilbert space $\mathcal{H}$ let $V_{\mathcal{M}}:\mathcal{M}\longrightarrow \mathcal{H}$ be the inclusion map from $\mathcal{M}$ to $\mathcal{H}$ defined as $V_{\mathcal{M}}(x) = x$ for each $x\in \mathcal{M}$.
 An operator $T\in \mathcal{B}(\mathcal{H},\mathcal{K})$ satisfies the property $\mathcal{AN}$ if and only if for every nontrivial closed linear subspace $\mathcal{M}$ of $\mathcal{H}$, $TV_{\mathcal{M}}$ satisfies the property $\mathcal{N}$.
\end{lemma}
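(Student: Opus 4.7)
The plan is to reduce the lemma to the observation that the inclusion map $V_{\mathcal{M}}$ is an isometry with range $\mathcal{M}$, so that $TV_{\mathcal{M}}$ and $T|_{\mathcal{M}}$ are really the same linear map viewed with different domains, one being the abstract Hilbert space $\mathcal{M}$ and the other being $\mathcal{M}$ regarded as a subspace of $\mathcal{H}$. The key algebraic fact I would establish first is that $\|TV_{\mathcal{M}}\| = \|T|_{\mathcal{M}}\|$, which follows because $\{V_{\mathcal{M}}(x) : x \in \mathcal{M}, \|x\|=1\}$ coincides with the unit sphere of $\mathcal{M}$ inside $\mathcal{H}$.

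For the forward direction, I would assume $T \in \mathcal{AN}$ and fix an arbitrary nontrivial closed subspace $\mathcal{M}$. By definition, there exists $x_0 \in \mathcal{M}$ with $\|x_0\|=1$ such that $\|T|_{\mathcal{M}}(x_0)\| = \|T|_{\mathcal{M}}\|$. Because $V_{\mathcal{M}}(x_0) = x_0$, this same unit vector $x_0$ (now viewed as a unit vector in the domain $\mathcal{M}$ of $V_{\mathcal{M}}$) witnesses $\|TV_{\mathcal{M}}(x_0)\| = \|Tx_0\| = \|T|_{\mathcal{M}}\| = \|TV_{\mathcal{M}}\|$, so $TV_{\mathcal{M}}$ satisfies $\mathcal{N}$.

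For the reverse direction, I would assume that $TV_{\mathcal{M}}$ satisfies $\mathcal{N}$ for every nontrivial closed subspace $\mathcal{M}$, and fix one such $\mathcal{M}$. By hypothesis there is $x_{\mathcal{M}}$ in the unit sphere of $\mathcal{M}$ (the domain of $V_{\mathcal{M}}$) with $\|TV_{\mathcal{M}}(x_{\mathcal{M}})\| = \|TV_{\mathcal{M}}\|$. Applying $V_{\mathcal{M}}$ sends $x_{\mathcal{M}}$ to itself, so the chain
\[
\|T|_{\mathcal{M}}\| = \|TV_{\mathcal{M}}\| = \|TV_{\mathcal{M}}(x_{\mathcal{M}})\| = \|Tx_{\mathcal{M}}\| = \|T|_{\mathcal{M}}(x_{\mathcal{M}})\|
\]
shows that $T|_{\mathcal{M}}$ achieves its norm at the unit vector $x_{\mathcal{M}} \in \mathcal{M}$, so $T|_{\mathcal{M}}$ satisfies $\mathcal{N}$. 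Since $\mathcal{M}$ was arbitrary, $T$ is $\mathcal{AN}$.

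There is no real obstacle here; the content is entirely bookkeeping about the isometric identification of $\mathcal{M}$ with its image under $V_{\mathcal{M}}$. The only point requiring a bit of care is remembering that the domain of $V_{\mathcal{M}}$ is $\mathcal{M}$ itself, not $\mathcal{H}$, so that a unit vector in the sense needed to verify property $\mathcal{N}$ for $TV_{\mathcal{M}}$ is automatically a unit vector of $\mathcal{M}$, and conversely. Once this identification is made the equivalence is immediate, and in fact the argument is already essentially written out in the discussion preceding the lemma statement.
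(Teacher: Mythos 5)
Your proposal is correct and follows essentially the same route as the paper, which proves the lemma in the discussion immediately preceding its statement: establish $\|TV_{\mathcal{M}}\| = \|T|_{\mathcal{M}}\|$ and then transfer norming vectors back and forth via the identification $V_{\mathcal{M}}(x)=x$. Nothing is missing.
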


The following application illustrates the power of this result.

\begin{prop}\label{Isometry is AN}
If $T\in \mathcal{B}(\mathcal{H},\mathcal{K})$ is an isometry, then $T$ satisfies the property $\mathcal{AN}$.
\end{prop}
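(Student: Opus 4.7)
The proof should be extremely short because there is essentially nothing to do once the definitions are unpacked. The plan is as follows.

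First I would recall that an isometry $T$ satisfies $\|Tx\| = \|x\|$ for every $x \in \mathcal{H}$, so in particular $\|T\| = 1$ (assuming $\mathcal{H} \neq \{0\}$; otherwise there is nothing to prove). I would then fix an arbitrary nontrivial closed subspace $\mathcal{M} \subseteq \mathcal{H}$ and observe that the restriction $T|_{\mathcal{M}} : \mathcal{M} \to \mathcal{K}$ is again an isometry, since isometry is inherited by restriction. Consequently $\|T|_{\mathcal{M}}\| = 1$, and for any unit vector $x \in \mathcal{M}$ we have
\[
\|T|_{\mathcal{M}}(x)\| = \|x\| = 1 = \|T|_{\mathcal{M}}\|.
\]
Thus $T|_{\mathcal{M}}$ achieves its norm at every unit vector of $\mathcal{M}$, so $T|_{\mathcal{M}}$ satisfies $\mathcal{N}$. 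Since $\mathcal{M}$ was arbitrary, $T$ satisfies $\mathcal{AN}$ by definition.

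Alternatively, one could invoke Lemma \ref{Trick}: for each nontrivial closed subspace $\mathcal{M}$, the composition $TV_{\mathcal{M}}$ is an isometry from $\mathcal{M}$ into $\mathcal{K}$ (since both $V_{\mathcal{M}}$ and $T$ are isometries), hence it trivially satisfies $\mathcal{N}$, and the lemma delivers the conclusion. Yet another route would go through Theorem \ref{!!}: since $T^*T = I$, one has $|T| = I$, whose norm $1$ is an eigenvalue (with every unit vector an eigenvector), so condition $(4)$ of Theorem \ref{!!} gives that $T$ is $\mathcal{N}$, and applying the same reasoning to each $T|_{\mathcal{M}}$ (which is still an isometry) yields $\mathcal{AN}$.

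There is no real obstacle here; the only thing to be careful about is the trivial degenerate case $\mathcal{H} = \{0\}$, which is excluded implicitly by the requirement that $\mathcal{M}$ be nontrivial. I would therefore choose the first, most direct, argument for the written proof.
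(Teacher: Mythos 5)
Your proof is correct and is essentially the same as the paper's: the paper argues that $TV_{\mathcal{M}}$ is an isometry from $\mathcal{M}$ into $\mathcal{K}$ and hence satisfies $\mathcal{N}$, which is exactly your direct restriction argument (and is also your stated alternative via Lemma \ref{Trick}).
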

\begin{proof}
That an isometry satisfies the property $\mathcal{N}$ is obvious; for the operator norm of an isometry is $1$ and it attains its norm on any vector of unit length.
For a closed linear subspace $\mathcal{M}$ of the Hilbert space $\mathcal{H}$ let $V_{\mathcal{M}}:\mathcal{M}\longrightarrow \mathcal{H}$ be the inclusion map from $\mathcal{M}$ to $\mathcal{H}$ defined as $V_{\mathcal{M}}(x) = x$ for each $x\in \mathcal{M}$. To prove the assertion, it suffices to show that for every nonzero closed linear subspace $\mathcal{M}$, $TV_{\mathcal{M}}$ is an $\mathcal{N}$ operator. But $TV_{\mathcal{M}}\in \mathcal{B}(\mathcal{M},\mathcal{K})$ is an isometry and hence satisfies the property $\mathcal{N}$. 
\end{proof}

\begin{lemma}\label{SP}
Let $T\in \mathcal{B}(\mathcal{H})$ be a diagonalizable operator on the complex Hilbert space $\mathcal{H}$ and $B=\{v_\alpha:\alpha \in \Lambda\}$ be an orthonormal basis of $\mathcal{H}$ corresponding to which $T$ is diagonalizable. 
If $T$ achieves its norm on the unit sphere of $\mathcal{H}$, then it achieves it norm on some $v_0\in B$.
Alternatively, if $T$ satisfies the property $\mathcal{N}$, then there exists $v_0\in B$ such that $\|T\| = \|Tv_0\|.$
\end{lemma}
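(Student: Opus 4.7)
The plan is to exploit the Parseval expansion of the norm-attaining vector against the orthonormal eigenbasis and to extract an eigenvector on which the maximum modulus is achieved. Since $T$ is diagonalizable with respect to $B$, write $Tv_\alpha = \lambda_\alpha v_\alpha$ for each $\alpha \in \Lambda$. Standard diagonalization gives $\|T\| = \sup_{\alpha \in \Lambda}|\lambda_\alpha|$, so in particular $|\lambda_\alpha| \leq \|T\|$ for every $\alpha$.

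By the hypothesis that $T$ satisfies $\mathcal{N}$, pick $x \in \mathcal{H}$ with $\|x\|=1$ and $\|Tx\| = \|T\|$. Expanding $x = \sum_{\alpha \in \Lambda}\inner{x}{v_\alpha} v_\alpha$ and applying $T$ termwise (using continuity, exactly as in the invariance argument in the proof of the spectral theorem earlier in the paper), we get $Tx = \sum_{\alpha \in \Lambda}\lambda_\alpha \inner{x}{v_\alpha} v_\alpha$. Parseval then yields
\[
\|T\|^2 = \|Tx\|^2 = \sum_{\alpha \in \Lambda}|\lambda_\alpha|^2 |\inner{x}{v_\alpha}|^2, \qquad 1 = \|x\|^2 = \sum_{\alpha \in \Lambda}|\inner{x}{v_\alpha}|^2.
\]
Subtracting $\|T\|^2$ times the second identity from the first gives
\[
0 = \sum_{\alpha \in \Lambda}\bigl(\|T\|^2 - |\lambda_\alpha|^2\bigr)|\inner{x}{v_\alpha}|^2,
\]
a sum of nonnegative terms.

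Because every summand must vanish, every $\alpha$ for which $\inner{x}{v_\alpha} \neq 0$ satisfies $|\lambda_\alpha| = \|T\|$. Since $\|x\|=1$, at least one such index $\alpha_0$ exists, and for this index $\|Tv_{\alpha_0}\| = |\lambda_{\alpha_0}| = \|T\|$. Setting $v_0 := v_{\alpha_0} \in B$ finishes the proof. I anticipate no serious obstacle here; the only mildly delicate point is justifying the termwise application of $T$ to the (possibly uncountable) Parseval expansion, but this has already been handled noninductively earlier in the paper via norm-continuity of $T$ along the net of finite partial sums, so the same boilerplate applies verbatim.
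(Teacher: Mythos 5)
Your proof is correct and rests on the same two ingredients as the paper's: the identity $\|T\|=\sup_{\alpha}|\lambda_\alpha|$ and the Parseval expansion $\|Tx\|^2=\sum_\alpha|\lambda_\alpha|^2|\inner{x}{v_\alpha}|^2$. The only difference is stylistic --- you argue directly that every nonzero Fourier coefficient of the norming vector must sit on an eigenvector with $|\lambda_\alpha|=\|T\|$, whereas the paper runs the same computation as a proof by contradiction assuming $|\lambda_\alpha|<\|T\|$ for all $\alpha$.
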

\begin{proof}
Let $\{\lambda_\alpha:\alpha\in \Lambda\}$ be the set of eigenvalues of $T$ corresponding to the the eigenvectors $\{v_\alpha:\alpha \in \Lambda\}$. From \cite[Problem 61]{Halmos}, we know that $\|T\| = \sup \{|\lambda_\alpha|:\alpha \in \Lambda\}$, so it suffices to prove that$\|T\| = \max \{|\lambda_\alpha|:\alpha \in \Lambda\}$.

To this end, by the way of contradiction, we assume the negation of the above claim. It implies that for every $\alpha \in \Lambda,$ we have $ |\lambda_\alpha|< \|T\|.$
However, for every $x\in \mathcal{H}$  with $\|x\|=1$, we have $Tx= \sum_{\alpha\in \Lambda} \lambda_\alpha\inner{x}{v_\alpha}v_\alpha$ so that 
$$
\|Tx\|^2= \sum_{\alpha\in \Lambda} |\lambda_\alpha|^2|\inner{x}{v_\alpha}|^2
 < \sum_{\alpha\in \Lambda} \|T\|^2|\inner{x}{v_\alpha}|^2
 =\|T\|^2\|x\|^2
= \|T\|^2;
$$
which is a contradiction of the fact that $T$ satisfies the property $\mathcal{N}$. This proves the claim.
\end{proof}

\begin{lemma}
Let $F\in\mathcal{B}(\mathcal{H})$ be a self-adjoint finite-rank operator and $\alpha \geq 0$. Then $\alpha I+F$ satisfies the property $\mathcal{N}$.
\end{lemma}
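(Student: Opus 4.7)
The plan is to reduce the statement to Proposition \ref{norming} by showing that $T := \alpha I + F$ is a self-adjoint operator whose spectrum is a finite set. First I would invoke the spectral theorem for compact self-adjoint operators, applied to $F$: this produces finitely many nonzero real eigenvalues $\lambda_1, \ldots, \lambda_n$ with an orthonormal eigenbasis $v_1, \ldots, v_n$ of $(\ker F)^\perp$, while $\ker F$ itself is the eigenspace corresponding to the eigenvalue $0$. Adjoining any orthonormal basis of $\ker F$ to $\{v_1, \ldots, v_n\}$ yields an orthonormal basis of $\mathcal{H}$ consisting of eigenvectors of $T$: each $v_i$ is an eigenvector of $T$ with eigenvalue $\alpha + \lambda_i$, and each basis vector of $\ker F$ is an eigenvector of $T$ with eigenvalue $\alpha$.

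Next I would note that $T$ is self-adjoint (since $\alpha \in \mathbb{R}$ and $F = F^*$) and that its spectrum is contained in the finite set
$$\sigma(T) \subseteq \{\alpha + \lambda_1, \ldots, \alpha + \lambda_n, \alpha\}.$$
For a self-adjoint operator one has $\|T\| = \sup_{\mu \in \sigma(T)}|\mu|$, and a supremum over a finite set is attained, say at $\mu_0 \in \sigma(T)$. Hence either $\mu_0 = \|T\|$ or $\mu_0 = -\|T\|$ is an eigenvalue of $T$, so by Proposition \ref{norming} the operator $T$ satisfies the property $\mathcal{N}$.

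I do not expect any substantive obstacle; once the spectral theorem is applied to $F$ the rest is bookkeeping. The only mild points of care are the degenerate cases $\ker F = \{0\}$ (in which $\mathcal{H}$ is finite-dimensional and $\sigma(T) = \{\alpha + \lambda_i\}$) and $F = 0$ (in which $T = \alpha I$ trivially attains its norm at every unit vector). As an alternative route, once the diagonalization of $T$ is in hand one could instead appeal to Lemma \ref{SP}: the supremum of the finite set of absolute eigenvalues of $T$ is automatically a maximum, and the corresponding basis eigenvector directly witnesses that $T$ satisfies $\mathcal{N}$.
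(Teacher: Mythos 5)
Your argument is correct and follows essentially the same route as the paper: diagonalize $F$ via its self-adjoint finite-rank structure, observe that the norm of $\alpha I+F$ is a supremum over the finite set $\{|\alpha+\lambda_1|,\dots,|\alpha+\lambda_n|,\alpha\}$ and hence is attained at an eigenvalue, and conclude norm attainment (the paper states this directly rather than citing Proposition~\ref{norming}, but the content is identical). Your explicit handling of the degenerate cases $\ker F=\{0\}$ and $F=0$ is a small bonus of care not spelled out in the paper.
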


\begin{proof}
Let the range of $F$ be $k$-dimensional. Since $F$ is self-adjoint, there exists an orthonormal basis $B = \{v_\lambda:\lambda\in \Lambda\}$ of $\mathcal{H}$ corresponding to which the matrix $M_B(F)$ is a diagonal matrix with $k$ nonzero real diagonal entries, say $\{\beta_1, \beta_2,...,\beta_k\}$. Clearly then, $M_B(\alpha I+F)$ is also a diagonal matrix and 
\begin{align*}
\|\alpha I+F\|&=\sup\{|\alpha+\beta_1|, |\alpha+\beta_2|,...,|\alpha+\beta_k|,\alpha\}\\
          &=\max\{|\alpha+\beta_1|, |\alpha+\beta_2|,...,|\alpha+\beta_k|,\alpha\}.
\end{align*}
It is then a trivial observation that there exists $v_0\in B$ such that $\|\alpha I+F\|=
\|(\alpha I+F)v_0\|$. This proves that $\alpha I+F$ achieves its norm on the unit sphere and hence is an $\mathcal{N}$ operator.
\end{proof}
This lemma leads to the following propostion.
\begin{prop}
If $F\in\mathcal{B}(\mathcal{H})$ is a self-adjoint finite-rank operator and $\alpha \geq 0$, then $\alpha I+F$ satisfies the property $\mathcal{AN}$.
\end{prop}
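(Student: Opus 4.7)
The plan is to reduce the $\mathcal{AN}$ claim to applying the preceding lemma (on $\alpha I+F$ being $\mathcal{N}$ when $F$ is self-adjoint finite-rank) inside each restriction. By Lemma~\ref{Trick}, it suffices to show that for every nontrivial closed subspace $\mathcal{M}\subseteq\mathcal{H}$, the operator $(\alpha I+F)V_{\mathcal{M}}:\mathcal{M}\to\mathcal{H}$ satisfies $\mathcal{N}$.

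To handle $(\alpha I+F)V_{\mathcal{M}}$, I would pass to its $T^*T$ via Theorem~\ref{TiffT*T}. Since $\alpha I+F$ is self-adjoint,
\[
\bigl((\alpha I+F)V_{\mathcal{M}}\bigr)^*(\alpha I+F)V_{\mathcal{M}} \;=\; V_{\mathcal{M}}^*(\alpha I+F)^2 V_{\mathcal{M}} \;=\; \alpha^2 I_{\mathcal{M}}+V_{\mathcal{M}}^*(2\alpha F+F^2)V_{\mathcal{M}},
\]
using $V_{\mathcal{M}}^*V_{\mathcal{M}}=I_{\mathcal{M}}$. Set $F':=V_{\mathcal{M}}^*(2\alpha F+F^2)V_{\mathcal{M}}\in\mathcal{B}(\mathcal{M})$. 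Then $F'$ is self-adjoint, and its rank is at most $\mathrm{rank}(2\alpha F+F^2)\leq 2\,\mathrm{rank}(F)<\infty$. So $F'$ is a self-adjoint finite-rank operator on the Hilbert space $\mathcal{M}$, and $\alpha^2\geq 0$.

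Applying the previous lemma to $F'$ and $\alpha^2$ on the Hilbert space $\mathcal{M}$, we conclude that $\alpha^2 I_{\mathcal{M}}+F'$ satisfies $\mathcal{N}$. Then Theorem~\ref{TiffT*T} (used in the reverse direction) gives that $(\alpha I+F)V_{\mathcal{M}}$ satisfies $\mathcal{N}$. Since $\mathcal{M}$ was arbitrary, Lemma~\ref{Trick} yields that $\alpha I+F$ satisfies $\mathcal{AN}$.

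I do not anticipate a serious obstacle: the only point to verify with care is that the $T^*T$ associated to $(\alpha I+F)V_{\mathcal{M}}$ is again of the form ``nonnegative scalar times identity plus self-adjoint finite-rank,'' which is exactly what the algebraic expansion above confirms. The self-adjointness of $\alpha I+F$ is what makes this algebra clean; without it one would have to separately control $F^*F$, but here $F^*F=F^2$ and everything collapses into the form needed to invoke the previous lemma.
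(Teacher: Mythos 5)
Your argument is correct and is essentially the paper's own proof: reduce to showing $(\alpha I+F)V_{\mathcal{M}}$ is $\mathcal{N}$ via Lemma~\ref{Trick}, pass to $V_{\mathcal{M}}^*(\alpha I+F)^2V_{\mathcal{M}}=\alpha^2 I_{\mathcal{M}}+\tilde F_{\mathcal{M}}$ via Theorem~\ref{TiffT*T}, and invoke the preceding lemma on $\mathcal{M}$. (Minor aside: the range of $2\alpha F+F^2$ already lies in $\mathrm{ran}(F)$, so its rank is at most $\mathrm{rank}(F)$, though your cruder bound suffices.)
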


\begin{proof}
For a closed linear subspace $\mathcal{M}$ of the Hilbert space $\mathcal{H}$ let $V_{\mathcal{M}}:\mathcal{M}\longrightarrow \mathcal{H}$ be the inclusion map from $\mathcal{M}$ to $\mathcal{H}$ defined as $V_{\mathcal{M}}(x) = x$ for each $x\in \mathcal{M}$.

Let us then define $T:= \alpha I+F$ so that we have $T^*=\alpha I+F$ and $T^*T = (\alpha I+F)^2 = \alpha^2 I+2\alpha F+F^2 = \beta I+\tilde F$ where $\beta=\alpha^2 \geq 0$ and $\tilde F = 2\alpha F+F^2$ is another self-adjoint finite-rank operator. We observe that
\begin{align*}
T \text{ is } \mathcal{AN}
&\iff  \text{ for every closed subspace } \mathcal{M} \text{ of } \mathcal{H}, TV_\mathcal{M} \text{ is } \mathcal{N}\\
&\iff  \text{ for every closed subspace } \mathcal{M} \text{ of } \mathcal{H}, (TV_\mathcal{M})^*(TV_\mathcal{M}) \text{ is } \mathcal{N}\\
&\iff  \text{ for every closed subspace } \mathcal{M} \text{ of } \mathcal{H},V_\mathcal{M}^*(T^*T)V_\mathcal{M} \text{ is } \mathcal{N}\\
&\iff  \text{ for every closed subspace } \mathcal{M} \text{ of } \mathcal{H}, V_\mathcal{M}^*(\beta I+\tilde F)V_\mathcal{M} \text{ is } \mathcal{N}.
\end{align*}
So, it suffices to show that for every closed subspace $\mathcal{M}$ of  $\mathcal{H},  V_\mathcal{M}^*(\beta I+\tilde F)V_\mathcal{M}$  is  $\mathcal{N}$. But $V_\mathcal{M}^*(\beta I+\tilde F)V_\mathcal{M}:\mathcal{M}\longrightarrow \mathcal{M}$ is an operator on $\mathcal{M}$ and 
$$
V_\mathcal{M}^*(\beta I+\tilde F)V_\mathcal{M} = V_\mathcal{M}^*\beta IV_\mathcal{M} + V_\mathcal{M}^* \tilde FV_\mathcal{M} = \beta I_\mathcal{M} +\tilde F_\mathcal{M},
$$
which implies that $
V_\mathcal{M}^*(\beta I+\tilde F)V_\mathcal{M}$ is sum of a nonnegative scalar multiple of identity and  a self-adjoint finite-rank operator on a Hilbert space $\mathcal{M}$ which, by the previous lemma, does satisfy the property $\mathcal{N}$ and thus proves our assertion.
\end{proof}

\begin{lemma}
For any positive compact operator $K\in\mathcal{B}(\mathcal{H})$ and $\alpha \geq 0$,  $\alpha I+K$ satisfies the property $\mathcal{N}$.
\end{lemma}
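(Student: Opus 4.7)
The plan is to reduce the claim to an application of Theorem~\ref{!} together with the fact, established in the first proposition of Section~2, that every compact operator (in particular the positive compact operator $K$) satisfies $\mathcal{AN}$, hence $\mathcal{N}$.

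First I would observe that $T := \alpha I + K$ is itself a positive operator, as a sum of two positive operators. By Theorem~\ref{!}, to show $T$ satisfies $\mathcal{N}$ it is enough to exhibit a unit vector $v \in \mathcal{H}$ with $Tv = \|T\| v$, i.e.\ to show that $\|T\|$ is an eigenvalue of $T$.

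Next I would use that $K$, being positive and compact, is an $\mathcal{AN}$ operator by the opening proposition of Section~2, hence in particular an $\mathcal{N}$ operator. Applying Theorem~\ref{!} to $K$ produces a unit vector $v \in \mathcal{H}$ with $Kv = \|K\| v$. For this vector we compute $Tv = \alpha v + Kv = (\alpha + \|K\|) v$, so $\|Tv\| = \alpha + \|K\|$. Combining this with the triangle inequality $\|T\| \leq \alpha + \|K\|$ forces $\|T\| = \alpha + \|K\| = \|Tv\|$. Thus $v$ is an eigenvector of $T$ for the eigenvalue $\|T\|$, and $T$ satisfies $\mathcal{N}$.

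I do not anticipate any real obstacle here: the only subtlety is that the argument relies on $K$ already being known to attain its norm (so that $\|K\|$ is an eigenvalue), but that fact is already available from the compact/$\mathcal{AN}$ proposition proved at the start of Section~2. If one wishes to avoid invoking $\mathcal{AN}$ for compact operators, an equally short alternative is to quote the spectral theorem for compact self-adjoint operators directly, which gives $\|K\|$ as an attained eigenvalue; the rest of the argument is unchanged.
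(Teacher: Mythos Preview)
Your proof is correct and follows essentially the same approach as the paper: both arguments locate a unit eigenvector $v$ of $K$ for the eigenvalue $\|K\|$ and then verify that this same $v$ is a norm-attaining vector for $\alpha I + K$. The only cosmetic difference is that the paper obtains $\|\alpha I + K\| = \alpha + \|K\|$ via the diagonal representation of $K$ and Lemma~\ref{SP}, whereas you get it more directly from Theorem~\ref{!} applied to $K$ together with the triangle inequality $\|\alpha I + K\| \le \alpha + \|K\|$.
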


\begin{proof}
That $K$ satisfies the property $\mathcal{N}$ is obvious, for $K$ is compact. The positivity of $K$ ascertains that there is an orthonormal basis $B=\{v_\lambda:\lambda \in \Lambda\}$ of $\mathcal{H}$, consisting entirely of eigenvectors of $K$, corresponding to which $K$ is diagonalizable; this fact , together with the lemma \ref{SP} implies that there exists $v_0\in B$ such that $\|K\| =\beta_0 =\text{max }\{\beta_\lambda:\lambda\in \Lambda\}= \|Kv_0\|,$ where $K(v_\lambda)=\beta_\lambda v_\lambda$ for each $\lambda\in \Lambda$. Since $\alpha\geq 0$, it readily follows  that 
\begin{multline*}
\|\alpha I + K\| = \sup\{\alpha + \beta_\lambda:\lambda\in \Lambda\} = \alpha + \sup\{\beta_\lambda:\lambda\in \Lambda\}  \\= \alpha + \text{max }\{\beta_\lambda:\lambda\in \Lambda\} = \alpha + \beta_0 = \|(\alpha I + K)(v_0)\|.
\end{multline*}
$\alpha I + K$ therefore achieves its norm on unit sphere for each $\alpha\geq 0.$ This completes the proof.
\end{proof}
 
This lemma is a special case of what the following proposition states.

\begin{prop}
For any positive compact operator $K\in\mathcal{B}(\mathcal{H})$ and $\alpha \geq 0$,  $\alpha I+K$ is $\mathcal{AN}$.
\end{prop}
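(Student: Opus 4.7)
The plan is to mirror the argument used for the self-adjoint finite-rank case, replacing the finite-rank ingredient with the preceding lemma on positive compact perturbations of a scalar. Concretely, I would invoke Lemma~\ref{Trick} and reduce the problem to showing that $(\alpha I+K)V_{\mathcal{M}}$ satisfies $\mathcal{N}$ for every nontrivial closed subspace $\mathcal{M}\subseteq \mathcal{H}$.

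First I would pass to $T^*T$ via Theorem~\ref{TiffT*T}: setting $T=\alpha I+K$, since $T$ is self-adjoint I get
\[
T^*T=(\alpha I+K)^2=\alpha^2 I+2\alpha K+K^2=\beta I+\tilde K,
\]
where $\beta:=\alpha^2\geq 0$ and $\tilde K:=2\alpha K+K^2$ is positive (being a sum of positive operators, using $\alpha\geq 0$) and compact (as a polynomial without constant term in the compact operator $K$). By Theorem~\ref{TiffT*T}, $(\alpha I+K)V_{\mathcal{M}}$ satisfies $\mathcal{N}$ iff $(TV_{\mathcal{M}})^*(TV_{\mathcal{M}})=V_{\mathcal{M}}^*(\beta I+\tilde K)V_{\mathcal{M}}$ does.

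Next I would expand this compression on $\mathcal{M}$ as
\[
V_{\mathcal{M}}^*(\beta I+\tilde K)V_{\mathcal{M}}=\beta I_{\mathcal{M}}+V_{\mathcal{M}}^*\tilde K V_{\mathcal{M}}.
\]
The operator $V_{\mathcal{M}}^*\tilde K V_{\mathcal{M}}\in \mathcal{B}(\mathcal{M})$ is positive, since $\langle V_{\mathcal{M}}^*\tilde K V_{\mathcal{M}}x,x\rangle=\langle \tilde K V_{\mathcal{M}}x,V_{\mathcal{M}}x\rangle\geq 0$, and it is compact, since composition of bounded operators with a compact operator is compact. Therefore $\beta I_{\mathcal{M}}+V_{\mathcal{M}}^*\tilde K V_{\mathcal{M}}$ is of the form (nonnegative scalar)$\cdot I_{\mathcal{M}}$ plus a positive compact operator on the Hilbert space $\mathcal{M}$, and the previous lemma applies verbatim on $\mathcal{M}$ to give that it satisfies $\mathcal{N}$.

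Tracing the equivalences backwards then yields that $(\alpha I+K)V_{\mathcal{M}}$ satisfies $\mathcal{N}$ for every nontrivial closed $\mathcal{M}\subseteq\mathcal{H}$, and Lemma~\ref{Trick} concludes that $\alpha I+K$ is $\mathcal{AN}$. I do not anticipate a genuine obstacle here; the only point to watch is confirming that the class of positive compact operators is closed under (i) multiplication by nonnegative scalars, (ii) addition, and (iii) compression by $V_{\mathcal{M}}$, all of which are elementary. The whole argument is essentially a transcription of the self-adjoint finite-rank proof, with the previous lemma substituting for its finite-rank analogue.
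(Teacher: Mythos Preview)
Your proposal is correct and follows essentially the same approach as the paper: both set $T=\alpha I+K$, compute $T^*T=\beta I+\tilde K$ with $\tilde K=2\alpha K+K^2$ positive compact, compress to $\mathcal{M}$ via $V_{\mathcal{M}}$ to obtain $\beta I_{\mathcal{M}}+V_{\mathcal{M}}^*\tilde K V_{\mathcal{M}}$, and then invoke the preceding lemma on $\mathcal{M}$ together with Lemma~\ref{Trick} and Theorem~\ref{TiffT*T}. Your explicit verification that $V_{\mathcal{M}}^*\tilde K V_{\mathcal{M}}$ is positive and compact is a small clarification the paper leaves implicit, but otherwise the arguments coincide.
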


\begin{proof}
Let us define $T:= \alpha I+K$ so that we have $T^*=\alpha I+K$ and $T^*T = (\alpha I+K)^2 = \alpha^2 I+2\alpha K+K^2 = \beta I+\tilde K$ where $\beta=\alpha^2 \geq 0$ and $\tilde K = 2\alpha K+K^2$ is another positive compact operator.
\begin{align*}
T \text{ is } \mathcal{AN}
&\iff  \text{ for every closed subspace } \mathcal{M} \text{ of } \mathcal{H}, TV_\mathcal{M} \text{ is } \mathcal{N}\\
&\iff  \text{ for every closed subspace } \mathcal{M} \text{ of } \mathcal{H}, (TV_\mathcal{M})^*(TV_\mathcal{M}) \text{ is } \mathcal{N}\\
&\iff  \text{ for every closed subspace } \mathcal{M} \text{ of } \mathcal{H}, V_\mathcal{M}^*(T^*T)V_\mathcal{M} \text{ is } \mathcal{N}\\
&\iff  \text{ for every closed subspace } \mathcal{M} \text{ of } \mathcal{H}, V_\mathcal{M}^*(\beta I+\tilde K)V_\mathcal{M} \text{ is } \mathcal{N}.
\end{align*}
So, it suffices to show that for every closed subspace $\mathcal{M}$ of  $\mathcal{H},  V_\mathcal{M}^*(\beta I+\tilde K)V_\mathcal{M}$  is  $\mathcal{N}$. But $V_\mathcal{M}^*(\beta I+\tilde K)V_\mathcal{M}:\mathcal{M}\longrightarrow \mathcal{M}$ is an operator on $\mathcal{M}$ and 
$$
V_\mathcal{M}^*(\beta I+\tilde K)V_\mathcal{M} = V_\mathcal{M}^*\beta IV_\mathcal{M} + V_\mathcal{M}^* \tilde KV_\mathcal{M} = \beta I_\mathcal{M} +\tilde K_\mathcal{M},
$$
which implies that $
V_\mathcal{M}^*(\beta I+\tilde K)V_\mathcal{M}$ is sum of a nonnegative scalar multiple of Identity and  a positive compact operator on a Hilbert space $\mathcal{M}$ which does satisfy the property $\mathcal{N}$ and hence proves our assertion.
\end{proof}

\begin{lemma}
Let $K\in\mathcal{B}(\mathcal{H})$ be a positive compact operator  and $F\in\mathcal{B}(\mathcal{H})$ be a self-adjoint finite-rank operator. Then $K+F$ can have at most finitely many negative eigenvalues.
\end{lemma}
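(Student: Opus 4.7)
The plan is to show that the number of negative eigenvalues of $K+F$, counted with multiplicity, is bounded by the dimension of the negative spectral subspace of $F$, which is finite since $F$ has finite rank.

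First I would use the spectral theorem for the self-adjoint finite-rank operator $F$: write $F = \sum_{j} \mu_j v_j \otimes v_j$, where $\{v_j\}$ is a finite orthonormal set of eigenvectors of $F$ with nonzero eigenvalues $\mu_j$. Let $\mathcal{E}_-$ denote the span of those $v_j$ with $\mu_j < 0$, and let $N := \dim \mathcal{E}_-$, which is at most $\operatorname{rank}(F)$ and hence finite. A direct computation with the spectral decomposition shows that for every $x \in \mathcal{E}_-^{\perp}$ we have $\langle Fx, x \rangle = \sum_{\mu_j \geq 0} \mu_j |\langle x, v_j\rangle|^2 \geq 0$, since $\langle x, v_j \rangle = 0$ for all $j$ with $\mu_j < 0$.

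Next I would argue by contradiction: assume $K+F$ has at least $N+1$ negative eigenvalues, counted with multiplicity. Since $K+F$ is self-adjoint, one can select orthonormal vectors $e_1, \ldots, e_{N+1}$ with $(K+F)e_i = \lambda_i e_i$ and $\lambda_i < 0$ (eigenvectors for distinct eigenvalues are automatically orthogonal, and within each eigenspace we pick an orthonormal basis). The subspace $\mathcal{V} := \operatorname{span}\{e_1, \ldots, e_{N+1}\}$ has dimension $N+1$, while $\mathcal{E}_-$ has dimension $N$; therefore the linear map $\mathcal{V} \to \mathcal{E}_-$ given by orthogonal projection has a nontrivial kernel, so there is a nonzero $x = \sum_{i=1}^{N+1} c_i e_i$ with $x \in \mathcal{E}_-^{\perp}$.

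For this $x$, orthonormality of the $e_i$ gives $\langle (K+F)x, x \rangle = \sum_{i=1}^{N+1} |c_i|^2 \lambda_i < 0$. On the other hand, $\langle Kx, x \rangle \geq 0$ because $K$ is positive, and $\langle Fx, x \rangle \geq 0$ by the observation above, so $\langle (K+F)x, x \rangle \geq 0$. This contradiction proves that $K+F$ has at most $N$ negative eigenvalues. The main (minor) obstacle is simply keeping track of the fact that eigenvectors of a self-adjoint operator for distinct eigenvalues are orthogonal, so that the chosen $e_1, \ldots, e_{N+1}$ genuinely span an $(N+1)$-dimensional subspace; everything else reduces to elementary linear algebra and a dimension count.
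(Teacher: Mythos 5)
Your proof is correct and follows essentially the same route as the paper's: identify the finite-dimensional subspace spanned by the eigenvectors of $F$ with negative eigenvalues, note that the quadratic form of $K+F$ is nonnegative on its orthogonal complement, and derive a contradiction from a dimension count if there were too many orthonormal eigenvectors with negative eigenvalues. The only cosmetic difference is that the paper first writes $F=F_+-F_-$ and absorbs $F_+$ into the compact part as $\tilde K=K+F_+$, whereas you keep $\langle Kx,x\rangle\geq 0$ and $\langle Fx,x\rangle\geq 0$ separate; this changes nothing essential.
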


\begin{proof}
Since $F$ is a self-adjoint finite-rank operator, there is an orthonormal basis $B$ of $\mathcal{H}$ consisting of eigenvectors of $F$
corresponding to which it is diagonalizable. This allows us to write  $F$ as the difference of two positive finite-rank operators, $F_+$ and $F_-$ so that $F=F_+-F_-$.
Consider the set of all eigenvectors in $B$ corresponding to which $F_-$ has nonzero (positive) eigenvalues. Needless to say that they are finite in numbers. Define $H_-$ to be the span of these eigenvectors. It is trivial to observe that $H_-$ is a closed finite-dimensional subspace of $\mathcal{H}$ and $\mathcal{H} = H_- \oplus H_-^\perp$. We assume that the dimension of $H_-$ is $k$, that is, $\dim{H_-}=k.$ We claim that the total number of negative eigenvalues of $K+F$ does not exceed $k$.
To prove this claim, we first observe that $K+F$ can now be rewritten as 
$ K + (F_+ -F_-) = (K + F_+) -F_- = \tilde{K} -F_-$ where $\tilde{K}=K + F_+$ is positive compact operator on $\mathcal{H}$. Also, $\tilde{K} -F_-$ is a self-adjoint compact operator and thus there exists an orthonormal basis $\ff{B}$ of $\mathcal{H}$ consisting entirely of eigenvectors of $\tilde{K} -F_-$ corresponding to which $\tilde{K} -F_-$ is diagonalizable. We next observe that 
$$
\text{ for any } x\in H_-^\perp,\hspace{0.2cm} \inner{(\tilde{K} -F_-)x}{x}\geq 0;
$$
because $F_-(x) =0$ for every $x\in H_-^\perp$ and $\inner{\tilde{K}x}{x}\geq 0$ for each $x\in \mathcal{H}$ and hence for each $x\in H_-^\perp.$
We are now ready to prove our claim. Consider the set of all orthonormal eigenvectors in $\ff{B}$ corresponding to which $\tilde{K} -F_-$ has negative eigenvalues. By the way of contradiction let us assume that the cardinality of this set is strictly bigger than $k$. We fix some $m>k$ and extract $m$ eigenvectors from this set. Let the set of these extracted eigenvectors be $\{v_1,v_2,v_3,...,v_m\}$ and the corresponding eigenvalues be $\{\lambda_1, \lambda_2, \lambda_3,...,\lambda_m\}.$
Since $m>k$, there exists $\alpha_1, \alpha_2,...,\alpha_m$ not all zero such that $P_{H_-}(\sum_{i=1}^m\alpha_i v_i )=0$. Then
\begin{align*}
\langle {(\tilde{K}-F_-)\left(\sum_{i=1}^m\alpha_i v_i\right )},{\sum_{j=1}^m\alpha_j v_j }\rangle &=\langle {\sum_{i=1}^m\alpha_i\lambda_i v_i},{\sum_{j=1}^m\alpha_j v_j }\rangle \\
&=\sum_{i=1}^m|\alpha_i|^2\lambda_i <0.
\end{align*}
But this contradicts the fact that $\sum_{i=1}^m\alpha_i v_i \in H_-^\perp$; for we established that $\text{ for any } x\in H_-^\perp, \inner{(\tilde{K} -F_-)x}{x}\geq 0$. This proves our claim.
\end{proof}
This observation leads us directly to the following proposition.
\begin{prop}
Let $K\in\mathcal{B}(\mathcal{H})$ be a positive compact operator  and $F\in\mathcal{B}(\mathcal{H})$ be a self-adjoint finite-rank operator. Then for every $\alpha \geq 0$, $\alpha I+K+F$ satisfies the property $\mathcal{N}$.
\end{prop}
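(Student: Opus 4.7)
First, $T := \alpha I + K + F$ is self-adjoint, so by Proposition~\ref{norming} it suffices to show that $\|T\|$ or $-\|T\|$ is an eigenvalue of $T$. Set $S := K + F$; as the sum of a compact and a finite-rank operator, $S$ is self-adjoint and compact. The spectral theorem for compact self-adjoint operators then tells us that $\sigma(S)\setminus\{0\}$ consists of eigenvalues of finite multiplicity whose only possible accumulation point is $0$, and by the preceding lemma only finitely many of these are negative. Writing $\lambda_+ := \sup\sigma(S)$ and $\lambda_- := \inf\sigma(S)$, we have $\sigma(T) = \alpha + \sigma(S)$ and hence $\|T\| = \max(|\alpha+\lambda_+|,\ |\alpha+\lambda_-|)$.

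The plan is then to split into two cases according to which side achieves this maximum. \emph{Case A: $\|T\| = \alpha+\lambda_+$.} If $\lambda_+ > 0$, the positive eigenvalues of $S$ accumulate only at $0$, so the supremum $\lambda_+$ is attained as an eigenvalue of $S$, whence $\alpha+\lambda_+ = \|T\|$ is an eigenvalue of $T$. If instead $\lambda_+ \leq 0$, then $S$ has no positive eigenvalues; combined with the preceding lemma's finiteness of the negative eigenvalues, $S$ has only finitely many nonzero eigenvalues, hence is finite-rank. Thus $S$ is self-adjoint and finite-rank, and the earlier lemma on $\alpha I + (\text{self-adjoint finite-rank})$ applies directly to $T = \alpha I + S$.

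\emph{Case B: $\|T\| = -(\alpha+\lambda_-)$.} Then $\lambda_- \leq -\alpha \leq 0$. If $\lambda_- < 0$, the preceding lemma ensures $\lambda_-$ is attained as one of the finitely many negative eigenvalues of $S$, so $\alpha + \lambda_- = -\|T\|$ is an eigenvalue of $T$. If $\lambda_- = 0$, then $\sigma(S) \subseteq [0,\infty)$, forcing $\|T\| \geq \alpha \geq 0$; combined with $\|T\| = -\alpha \leq 0$ this gives $\alpha = \|T\| = 0$ and $T = 0$, for which $\mathcal{N}$ is trivial.

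The main subtlety is the sub-case $\lambda_+ = 0$ of Case A: although $0 \in \sigma(S)$ when $\mathcal{H}$ is infinite-dimensional, it need not be an eigenvalue, so one cannot directly extract $\|T\| = \alpha$ as an eigenvalue of $T$ from the spectral picture of $S$. The way around this is the observation that the combined constraints ``no positive eigenvalues'' and ``only finitely many negative eigenvalues'' force the compact self-adjoint operator $S$ to be finite-rank, at which point the earlier finite-rank lemma closes the case.
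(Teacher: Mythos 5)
Your proof is correct and follows essentially the same route as the paper's: both reduce the problem to showing that $\pm\|T\|$ is attained as an eigenvalue of the self-adjoint operator $T=\alpha I+K+F$, and both obtain this from the compactness of $K+F$ together with the preceding lemma's guarantee of only finitely many negative eigenvalues, via a case split on whether the extremal spectral value comes from the nonnegative or the negative part of the spectrum. The only organizational difference is your treatment of the degenerate sub-case $\lambda_+\le 0$ by observing that $S=K+F$ must then be finite rank and invoking the earlier lemma on $\alpha I+F$, where the paper instead argues directly that the supremum of the nonnegative eigenvalues is attained by compactness of the corresponding diagonal block.
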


\begin{proof}
The assertion is trivial if $\alpha =0$; for then $K+F$ is a compact operator which satisfies the property $\mathcal{N}$. We assume that $\alpha>0$. Notice that $K+F$ is a self-adjoint compact operator on $\mathcal{H}$ and thus there exists an orthonormal basis $B$ of $\mathcal{H}$ consisting entirely of eigenvectors of $K+F$ corresponding to which it is diagonalizable. From the previous lemma, $K+F$ can have at most finitely many negative eigenvalues. Let $\{\lambda_1, \lambda_2,...,\lambda_n\}$ be the set of all negative eigenvalues of $K+F$ with $\{v_1,v_2,...,v_n\}$ as the corresponding eigenvectors in basis $B$; and let $\{\mu_\beta:\beta\in \Lambda\}$ be the set of all remaining nonnegative eigenvalues of $K+F$ with $\{w_\beta:\beta\in \Lambda\}$ as the corresponding eigenvectors in $B$. We have $B:= \{v_1,v_2,...,v_n\}\cup \{w_\beta:\beta\in \Lambda\}$ and the matrix $M_B(K+F)$ of $K+F$ with respect to $B$ is given by
$$
K+F = \begin{bmatrix}
\lambda_1 & & &\vdots & & &\\
& \ddots & &\vdots & &\text{\huge 0}  &\\
 &  & \lambda_n &\vdots & & &\\
\hdots& \hdots & \hdots & \hdots&\hdots &\hdots&\hdots\\
&  &  &\vdots & \ddots& &\\
&  \text{\huge 0}&  &\vdots &  & \mu_\beta&\\
&  &  &\vdots &  & &\ddots \\
\end{bmatrix}.
$$
Observing the fact that 
$$
\|K+F\|=\max \{\{|\lambda_i|\}_{i=1}^n\cup \{\mu_\beta\}_{\beta\in \Lambda}\},
$$
we proceed to show that $\alpha I +K+ F$ satisfies property $\mathcal{N}$. To accomplish this we distinguish cases:\\

\noindent
\textit{Case I.} If $\mu_{\hat \beta} =\max \{\{|\lambda_i|\}_{i=1}^n\cup \{\mu_\beta\}_{\beta\in \Lambda}\}$ for some $\hat \beta\in \Lambda$.
Needless to say that $\|K+F\| = \mu_{\hat\beta} = \|(K+F)(w_{\hat \beta})\|$.
Clearly then 
\begin {align*}
\alpha +\mu_{\hat \beta}  \geq \alpha +|\lambda_i| &\geq |\alpha+\lambda_i| \text{ for each } i\in \{1,2,...,n\}, \text{ and } \\
 \alpha +\mu_{\hat \beta} &\geq \alpha +\mu_{\beta} \text{ for   each }\beta\in\Lambda.
\end{align*}
It is now easy to convince ourselves that if $w_{\hat\beta}$ be the eigenvector corresponding to the eigenvalue $\mu_{\hat \beta}$ then 
$
\|\alpha I +K+F\| =\|\alpha + \mu_{\hat\beta}\|=\|(\alpha I +K+F) (w_{\hat \beta})\|
$
which implies that $\alpha I +K+F$ achieves its norm at $w_{\hat \beta}$.\\

\noindent
\textit{Case II.} If $|\lambda_m| =\max \{\{|\lambda_i|\}_{i=1}^n\cup \{\mu_\beta\}_{\beta\in \Lambda}\}$ for some $m\in \{1,2,...,n\}$.
In this case it is important to observe that 
$$
\sup\{\mu_\beta:\beta\in \Lambda\} = \max\{\mu_\beta:\beta\in \Lambda\};
$$
indeed the matrix $M_B(K+F)$ can be written as

$$
 K+F=\begin{bmatrix}
\lambda_1 & & &\vdots & & &\\
& \ddots & &\vdots & &\text{\huge 0}  &\\
 &  & \lambda_n &\vdots & & &\\
\hdots& \hdots & \hdots & \hdots&\hdots &\hdots&\hdots\\
&  &  &\vdots & & &\\
&  \text{\huge 0}&  &\vdots &  &\text{\huge 0}&\\
&  &  &\vdots &  & & \\
\end{bmatrix} + \begin{bmatrix}
 & & &\vdots & & &\\
& \text{\huge 0} & &\vdots & &\text{\huge 0}  &\\
 &  &  &\vdots & & &\\
\hdots& \hdots & \hdots & \hdots&\hdots &\hdots&\hdots\\
&  &  &\vdots & \ddots& &\\
&  \text{\huge 0}&  &\vdots &  & \mu_\beta&\\
&  &  &\vdots &  & &\ddots \\
\end{bmatrix},
$$ 
where the first matrix is compact. Consequently the second matrix is forced to be compact which implies that $\sup\{\mu_\beta:\beta\in \Lambda\} = \max\{\mu_\beta:\beta\in \Lambda\}$. Let $ \max\{\mu_\beta:\beta\in \Lambda\} =\mu_{\tilde \beta} $ for some $\tilde \beta \in \Lambda.$
It is then a trivial observation that 
$
\sup \{\{|\alpha +\lambda_i|\}_{i=1}^n\cup \{\alpha +\mu_\beta\}_{\beta\in \Lambda}\} = \max\{\alpha + \mu_{\tilde \beta}, |\alpha + \lambda_1|,...,|\alpha+\lambda_n|\}
$
which ascertains that the operator $\alpha I + K +F$  satisfies the property $\mathcal{N}$.
We conclude the proof by a note that $\alpha I + K +F$ need not necessarily be  positive for the proof to work. 
\end{proof}
This result is the key to the theorem that follows. The following result could be deduced from \cite[Theorem 3.23]{CN} but there are some gaps in their proof of \cite[Lemma 3.7]{CN} which is essential to their proof of \cite[Theorem 3.23]{CN}; so we provide an independent proof.
\begin{thm}
Let $K\in\mathcal{B}(\mathcal{H})$ be a positive compact operator  and $F\in\mathcal{B}(\mathcal{H})$ be a self-adjoint finite-rank operator. Then for every $\alpha \geq 0$, $\alpha I+K+F$ satisfies the property $\mathcal{AN}$.
\end{thm}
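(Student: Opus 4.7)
The plan is to bootstrap the proposition immediately above (which gives $\mathcal{N}$) into the $\mathcal{AN}$ conclusion by the same trick used for $\alpha I + F$ and $\alpha I + K$: apply Lemma~\ref{Trick}, pass to $T^*T$ via Theorem~\ref{TiffT*T}, and observe that the resulting operator, when compressed to any closed subspace, has precisely the form $(\text{positive compact}) + (\text{self-adjoint finite rank}) + (\text{nonnegative scalar})\cdot I$ so that the $\mathcal{N}$-proposition applies.

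First I set $T := \alpha I + K + F$ and expand
\[
T^*T = (\alpha I + K + F)^2 = \alpha^2 I + \bigl(2\alpha K + K^2\bigr) + \bigl(2\alpha F + KF + FK + F^2\bigr).
\]
Write this as $\beta I + \widetilde K + \widetilde F$ with $\beta := \alpha^2 \ge 0$, $\widetilde K := 2\alpha K + K^2$, and $\widetilde F := 2\alpha F + KF + FK + F^2$. The operator $\widetilde K$ is positive (both summands are) and compact (product/scalar multiple of compact). The operator $\widetilde F$ is self-adjoint, since $(KF)^* = FK$, and has finite rank, because $KF,\, FK,\, F^2$, and $F$ all factor through the finite-dimensional range of $F$.

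Second, by Lemma~\ref{Trick}, $T$ is $\mathcal{AN}$ iff $TV_\mathcal{M}$ satisfies $\mathcal{N}$ for every nontrivial closed subspace $\mathcal{M}\subseteq \mathcal{H}$. By Theorem~\ref{TiffT*T}, $TV_\mathcal{M}$ is $\mathcal{N}$ iff $(TV_\mathcal{M})^*(TV_\mathcal{M}) = V_\mathcal{M}^*\,T^*T\,V_\mathcal{M}$ is $\mathcal{N}$. Using the decomposition above,
\[
V_\mathcal{M}^*\,T^*T\,V_\mathcal{M} \;=\; \beta I_\mathcal{M} \;+\; V_\mathcal{M}^* \widetilde K V_\mathcal{M} \;+\; V_\mathcal{M}^* \widetilde F V_\mathcal{M}.
\]
The compression $V_\mathcal{M}^* \widetilde K V_\mathcal{M}$ is again a positive compact operator on $\mathcal{M}$ (compressions preserve positivity and compactness), and $V_\mathcal{M}^* \widetilde F V_\mathcal{M}$ is a self-adjoint finite-rank operator on $\mathcal{M}$ (rank cannot increase under compression, and self-adjointness is preserved).

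Third, I invoke the preceding proposition, applied on the Hilbert space $\mathcal{M}$ with the positive compact operator $V_\mathcal{M}^* \widetilde K V_\mathcal{M}$, the self-adjoint finite-rank operator $V_\mathcal{M}^* \widetilde F V_\mathcal{M}$, and the nonnegative scalar $\beta$: it yields that $\beta I_\mathcal{M} + V_\mathcal{M}^* \widetilde K V_\mathcal{M} + V_\mathcal{M}^* \widetilde F V_\mathcal{M}$ satisfies $\mathcal{N}$. Hence $V_\mathcal{M}^* T^*T V_\mathcal{M}$ is $\mathcal{N}$, so $TV_\mathcal{M}$ is $\mathcal{N}$; since $\mathcal{M}$ was arbitrary, Lemma~\ref{Trick} gives that $T = \alpha I + K + F$ is $\mathcal{AN}$.

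The only nontrivial point is the bookkeeping in the second paragraph: verifying that the cross terms $KF+FK$ really assemble into a self-adjoint finite-rank perturbation (rather than spilling into the compact part) and that compression to $\mathcal{M}$ does not destroy the ``positive compact $+$ finite-rank self-adjoint'' splitting. Both are routine once one notes that $(KF)^* = FK$ and that $\operatorname{rank}(V_\mathcal{M}^* \widetilde F V_\mathcal{M}) \le \operatorname{rank}(\widetilde F) < \infty$, so no genuine obstacle arises and the reduction to the previous proposition is clean.
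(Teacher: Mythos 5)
Your proof is correct and follows essentially the same route as the paper's: the identical expansion $T^*T=\beta I+\widetilde K+\widetilde F$, the reduction via Lemma~\ref{Trick} and Theorem~\ref{TiffT*T} to showing each compression $V_\mathcal{M}^*(\beta I+\widetilde K+\widetilde F)V_\mathcal{M}=\beta I_\mathcal{M}+\widetilde K_\mathcal{M}+\widetilde F_\mathcal{M}$ is $\mathcal{N}$, and the appeal to the preceding $\mathcal{N}$-proposition on $\mathcal{M}$. Your explicit verification that $\widetilde F$ is self-adjoint (via $(KF)^*=FK$) and that compression preserves the splitting is a slightly more careful write-up of what the paper asserts without comment.
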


\begin{proof}
Let $\mathcal{M}$ be an arbitrary nonempty closed linear subspace of the Hilbert space $\mathcal{H}$ and $V_{\mathcal{M}}:\mathcal{M}\longrightarrow \mathcal{H}$ be the inclusion map from $\mathcal{M}$ to $\mathcal{H}$ defined as $V_{\mathcal{M}}(x) = x$ for each $x\in \mathcal{M}$.

Let us then define $T:= \alpha I+K+F$ so that we have $T^*=\alpha I+K+F$ and $T^*T = (\alpha I+K+F)^2 = (\alpha^2 I)+(2\alpha K+K^2) + (2\alpha F + FK +KF +F^2) = \beta I+\tilde K + \tilde F$ where $\beta=\alpha^2 \geq 0$, $\tilde K = 2\alpha K+K^2$ and $\tilde F = 2\alpha F + FK +KF +F^2$ are respectively positive compact and self-adjoint finite-rank operators. Observe that
\begin{multline*}
 TV_\mathcal{M} \text{ is } \mathcal{N} \iff  (TV_\mathcal{M})^*(TV_\mathcal{M}) \text{ is } \mathcal{N}\\ \iff V_\mathcal{M}^*(T^*T)V_\mathcal{M} \text{ is } \mathcal{N} \iff  V_\mathcal{M}^*( \beta I+\tilde K + \tilde F)V_\mathcal{M} \text{ is } \mathcal{N}.
\end{multline*}
It suffices to show that $V_\mathcal{M}^*( \beta I+\tilde K + \tilde F)V_\mathcal{M}$  is  $\mathcal{N}$; for then, since $\mathcal{M}$ is arbitrary, it immediately follows from lemma \ref{Trick}  that $T$ is an $\mathcal{AN}$ operator. To this end, notice that $V_\mathcal{M}^*( \beta I+\tilde K + \tilde F)V_\mathcal{M}:\mathcal{M}\longrightarrow \mathcal{M}$ is an operator on $\mathcal{M}$ and 
$$
V_\mathcal{M}^*( \beta I+\tilde K + \tilde F)V_\mathcal{M} = V_\mathcal{M}^*\beta IV_\mathcal{M} +V_\mathcal{M}^*\tilde K V_\mathcal{M} +V_\mathcal{M}^* \tilde FV_\mathcal{M} = \beta I_\mathcal{M} +\tilde K_\mathcal{M}+\tilde F_\mathcal{M},
$$
which implies that $V_\mathcal{M}^*( \beta I+\tilde K + \tilde F)V_\mathcal{M}$ is sum of a nonnegative scalar multiple of Identity, a positive compact operator and a self-adjoint finite-rank operator on a Hilbert space $\mathcal{M}$ which, by the preceding proposition, satisfies the property $\mathcal{N}$. This proves the assertion.
\end{proof}

\begin{remark}
It is desirable at this stage to make an important remark: the sum of two $\mathcal{AN}$ operators need not necessarily be an $\mathcal{AN}$ operator. An example \cite[Section 2, Page 182]{CN} appears in \cite{CN} which establishes that the sum of two $\mathcal{N}$ operators need not necessarily be an $\mathcal{N}$ operator. In fact, one can show that the operators they consider are not just $\mathcal{N}$ operators but $\mathcal{AN}$. In what follows, we give an example of an operator $T\in \mathcal{H}$ which is $\mathcal{AN}$ but $2$Re$(T)$ is not, which also implies that sum of two $\mathcal{AN}$ operators need not be $\mathcal{AN}$.
\end{remark}

\begin{exam}\label{ANisNOTvs}
Let $\{e_i\}_{i\in \mathbb{N}}$ be the canonical orthonormal basis of the Hilbert space $\ell^2(\mathbb{N})$, $a\in (0,1]$, and $(a_i)_{i\in \mathbb{N}}, (b_i)_{i\in \mathbb{N}}$ be two sequences of real numbers such that
$$
0<a_1<a_2<...<a, \hspace{0.5cm} a_i\nearrow a, \hspace{0.2cm} \text{and} \hspace{0.2cm} a_i^2+b_i^2 = 1.
$$
Let $T\in \mathcal{B}(\ell^2(\mathbb{N}))$ defined as $Te_i =\lambda_ie_i$ for each $i\in \mathbb{N}$, where $\lambda_i=a_i+ib_i$. Then $T^*e_i= \overline\lambda_ie_i$. It is easy to observe that both $T$ and $T^*$ are isometries.  Indeed, if $x\in\ell^2(\mathbb{N})$, then $x=\sum_{i=1}^\infty\inner{x}{e_i}e_i$ which implies that
$$
\|Tx\|^2 =\left\|\sum_{i=1}^\infty\inner{x}{e_i}\lambda_ie_i\right\|^2=\|x\|^2
=\left\|\sum_{i=1}^\infty\inner{x}{e_i}\overline\lambda_ie_i\right\|^2= \|T^*x\|^2.
$$
By Proposition \ref{Isometry is AN}, we infer that $T$ and $T^*$ are $\mathcal{AN}$ operators.
We now show that $T+T^*$ is not an $\mathcal{AN}$ operator. Since every $\mathcal{AN}$ operator is an $\mathcal{N}$ operator, it suffices to show that  $T+T^*$ is not an $\mathcal{N}$ operator. To this end, notice that $\|T+T^*\| \geqslant \sup\{\|Te_i\|:i\in \mathbb{N}\}
  =\sup\{|\lambda_i+\overline\lambda_i|:i\in \mathbb{N}\}=
\sup\{|2a_i|:i\in\mathbb{N}\}=2a$. But for every $x\in\ell^2(\mathbb{N})$ with $\|x\|=1$, we have
$$
\|(T+T^*)x\|^2=\sum_{i=1}^\infty|\lambda_i+\overline\lambda_i|^2|\inner{x}{e_i}|^2
=\sum_{i=1}^\infty|2a_i|^2|\inner{x}{e_i}|^2<4a^2.
$$ 
Consequently, for every $x\in\ell^2(\mathbb{N})$ of unit length 
$\|(T+T^*)x\|<2a\leqslant \|T+T^*\|$ which implies that $T+T^*$ does not satisfy the property $\mathcal{N}$.

\end{exam}

\section{Spectral Characterization Of Positive $\mathcal{AN}$ Operators}

The final theorem of the preceding section just established ------ that for every $\alpha\geq 0, \alpha I + K +F$ satisfies the $\mathcal{AN}$ property where $K$ and $F$ are respectively positive compact and self-adjoint finite-rank operators ------ is the stronger version of the backward implication of our spectral theorem for positive $\mathcal{AN}$ operators. If the operator $ \alpha I + K +F$ is also positive then the implication can be reversed and the two conditions are equivalent. This is what the next theorem states.

\begin{thm}[Spectral Theorem For positive $\mathcal{AN}$ Operators]\label{SpThPAN}
Let $\mathcal{H}$ be a complex Hilbert space of arbitrary dimension and let $P$ be a positive operator on $\mathcal{H}$. Then $P$ is an $\mathcal{AN}$ operator if and only if $P$ is of the form $P= \alpha I +K+F$, where $\alpha \geq 0, K$ is a positive compact operator and $F$ is self-adjoint finite-rank operator.
\end{thm}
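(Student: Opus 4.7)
The $(\Leftarrow)$ direction is precisely the final theorem of the previous section, so the whole content lies in the forward implication. Assume $P \geq 0$ is $\mathcal{AN}$. My plan is to read off the structure of $P$ directly from the necessary conditions collected in Theorem~\ref{NC}, and then choose $\alpha$ so that $P - \alpha I$ splits naturally into a positive compact part plus a self-adjoint finite-rank part.

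By Theorem~\ref{NC} we may write $P = \sum_{\alpha \in \Lambda} \beta_\alpha v_\alpha \otimes v_\alpha$ for an orthonormal basis $\{v_\alpha\}$ of eigenvectors with eigenvalues $\beta_\alpha \geq 0$ satisfying conditions (i)--(iv). I will argue by cases on the structure of $\sigma(P)$.

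\textbf{Case 1:} $\sigma(P)$ has a (necessarily unique) limit point $\beta$. Set $\alpha := \beta \geq 0$. The crucial claim is that there are only finitely many indices $\gamma$ with $\beta_\gamma < \beta$, counted with multiplicity. To see this, first note that the set $\{\beta_\gamma : \beta_\gamma < \beta\}$ cannot be infinite: any infinite bounded set in $\mathbb{R}$ has a limit point, which would then be a second limit point of $\sigma(P)$ unless it equals $\beta$; but condition (ii) forbids $\beta$ from being approached by an increasing sequence, so no limit point from below is possible. Therefore only finitely many distinct eigenvalues $\beta_1', \ldots, \beta_r'$ lie strictly below $\beta$, and each has finite multiplicity by (iii) combined with (iv) (the unique possible infinite-multiplicity eigenvalue, if any, must equal the limit point $\beta$). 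Hence
\[
F := \sum_{\beta_\gamma < \beta}(\beta_\gamma - \beta)\, v_\gamma \otimes v_\gamma
\]
is a self-adjoint finite-rank operator. The remaining part
\[
K := \sum_{\beta_\gamma > \beta}(\beta_\gamma - \beta)\, v_\gamma \otimes v_\gamma
\]
is diagonal with non-negative eigenvalues whose only possible limit value is $0$ (since $\beta_\gamma \searrow \beta$), with each positive eigenvalue of finite multiplicity by (iii), so $K$ is a positive compact operator. By construction $P - \beta I = K + F$ as desired.

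\textbf{Case 2:} $\sigma(P)$ has no limit point. Then $\sigma(P)$ is finite. If $\mathcal{H}$ is finite-dimensional, take $\alpha = 0$, $K = 0$, $F = P$. Otherwise some eigenvalue must have infinite multiplicity, and by (iii) there is exactly one such, call it $\hat\beta$; set $\alpha := \hat\beta \geq 0$. All the remaining eigenvalues $\beta_\gamma \neq \hat\beta$ have finite multiplicity and there are finitely many of them, so
\[
F := \sum_{\beta_\gamma \neq \hat\beta}(\beta_\gamma - \hat\beta)\, v_\gamma \otimes v_\gamma
\]
is self-adjoint and of finite rank, and $P = \hat\beta I + 0 + F$ with $K = 0$.

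The step I expect to require the most care is the claim in Case~1 that only finitely many eigenvalues lie below $\beta$ (with finite total multiplicity); this is exactly the point where properties (ii), (iii) and (iv) must be used in tandem, and it is what allows the ``negative'' corrections to $P - \beta I$ to be absorbed into a finite-rank operator rather than spoil the compactness of $K$. Everything else reduces to bookkeeping on the spectral series.
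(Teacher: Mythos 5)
Your proof is correct and follows essentially the same route as the paper: both read the decomposition off Theorem~\ref{NC} by centering at the limit point (or the infinite-multiplicity eigenvalue when no limit point exists), putting the finitely many eigenvalues below $\alpha$ into $F$ and the rest into $K$. The only difference is organizational --- you merge the paper's four cases into two by letting condition (iv) handle the infinite-multiplicity eigenvalue inside the limit-point case, which is a harmless streamlining.
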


\begin{proof}
It suffices to prove the forward implication. We assume that $P\in\mathcal{B}(\mathcal{H}) $ is a positive $\mathcal{AN}$ operator. Theorem \ref{NC} asserts that there exists an orthonormal basis $B=\{v_\lambda:\lambda\in \Lambda\}$ consisting entirely of eigenvectors of $P$ and for every $\lambda \in \Lambda, Tv_\lambda =\beta_\lambda v_\lambda$ with $\beta_\lambda \geq 0$. A moment's thought will convince the reader that there are four mutually exclusive and exhaustive set of possibilities for the spectrum $\sigma (P)= \text{clos}\{\beta_\lambda:\lambda\in \Lambda\}$ of $P$.\\

\noindent
\textit{Case 1.} $\sigma (P)$ has neither a limit point nor an eigenvalue with infinite multiplicity. \\ The index set $\Lambda$ is then finite; for if it is not then the set $\{\beta_\lambda:\lambda\in \Lambda\}$ (counting multiplicities) of eigenvalues is also infinite. Since each eigenvalue in this set can have at most finite multiplicity, it is obvious then that the set $\{\beta_\lambda:\lambda\in \Lambda\}$ (without counting multiplicities) of distinct eigenvalues of $P$ is infinite. More interestingly,  $\{\beta_\lambda:\lambda\in \Lambda\}$ is bounded above by the operator norm of $P$ and below by $0$. Since every infinite bounded subset of real numbers has a limit point, we arrive at a contradiction and hence $\Lambda$ is finite. This forces the Hilbert space $\mathcal{H}$ to be finite dimensional. In that case $P$ boils down to a positive (and hence self-adjoint) finite-rank operator and we can safely assume that $P= \alpha I +K+F$ with $\alpha =0$, $K=0$ and $F$ the operator in question. \\ 

\noindent
\textit{Case 2.} $\sigma (P)$ has no limit point but has one eigenvalue with infinite multiplicity.\\
Let $\beta_0 \in \{\beta_\lambda:\lambda\in \Lambda\}$ be the eigenvalue with infinite multiplicity. Then the set $\Gamma:=\Lambda \setminus \{\lambda\in \Lambda:\beta_\lambda=\beta_0\}$ is finite; for if it is not, then the set $\{\beta_\lambda:\lambda\in \Gamma\}$ (counting multiplicities) is also infinite which in turn implies that the set $\{\beta_\lambda:\lambda\in \Gamma\}$ (without counting multiplicities) is infinite because each eigenvalue in this set can have at most finite multiplicity. Since $\{\beta_\lambda:\lambda\in \Gamma\}$ is bounded and every infinite bounded subset of real numbers has a limit point, we arrive at a contradiction.This implies that $\Gamma$ is finite. 
Observe that for an arbitrary $x\in \mathcal{H}$, we have 
$
x=\sum_{\lambda\in \Lambda}\inner{x}{v_\lambda}v_\lambda=\sum_{\lambda\in \Gamma}\inner{x}{v_\lambda}v_\lambda+\sum_{\lambda\in\Lambda\setminus\Gamma}\inner{x}{v_\lambda}v_\lambda
$
so that for every $x\in \mathcal{H},$
\begin{align*}
Px &=\sum_{\lambda\in \Gamma}\inner{x}{v_\lambda}P(v_\lambda)
+\sum_{\lambda\in\Lambda\setminus\Gamma}\inner{x}{v_\lambda}P(v_\lambda)\\
&=\sum_{\lambda\in \Gamma}\inner{x}{v_\lambda}\beta_\lambda v_\lambda
+\sum_{\lambda\in\Lambda\setminus\Gamma}\inner{x}{v_\lambda}\beta_0 v_\lambda\\
&=\sum_{\lambda\in \Gamma}(\beta_\lambda -\beta_0)\inner{x}{v_\lambda}v_\lambda
+\beta_0 \sum_{\lambda\in\Lambda}\inner{x}{v_\lambda}v_\lambda\\ 
&=\sum_{\lambda\in \Gamma}(\beta_\lambda -\beta_0)(v_\lambda \otimes v_\lambda) (x)
+\beta_0 Ix\\
&= \left( \sum_{\lambda\in \Gamma}(\beta_\lambda -\beta_0)(v_\lambda \otimes v_\lambda) + \beta_0I\right)(x).
\end{align*}
To conclude this case it suffices to observe that $\beta_0 \geq 0$ and $\sum_{\lambda\in \Gamma}(\beta_\lambda -\beta_0)(v_\lambda\otimes v_\lambda)$ is a self-adjoint finite-rank operator. It then readily follows that $P= \alpha I +K+F$, where $\alpha = \beta_0, K=0$ and $F =\sum_{\lambda\in \Gamma}(\beta_\lambda -\beta_0)(v_\lambda \otimes v_\lambda).$ \\

\noindent
\textit{Case 3.} $\sigma (P)$ has no eigenvalue with infinite multiplicity but has a limit point.\\
The index set $\Lambda$ is then countable; for if it is uncountable then the set $\{\beta_\lambda:\lambda\in \Lambda\}$ (counting multiplicities) is also uncountable thereby rendering the set $\{\beta_\lambda:\lambda\in \Lambda\}$ (without counting multiplicities) uncountable since each eigenvalue in this set has finite multiplicity. Then this uncountable set must have at least two limit points; and since this is impossible, we infer that $\Lambda$ is countable and hence $\mathcal{H}$ is separable. Having shown that $\Lambda$ is countable, we can safely replace $\Lambda$ by $\mathbb{N}$. This essentially redefines the spectrum 
$\sigma(P)=\text{clos}\{\beta_n:n\in \mathbb{N}\}$ of $P$. 

Now let $\beta \in \sigma(P)$ be the unique limit point in the spectrum. We wish to  reorder the elements of $\{\beta_n:n\in \mathbb{N}\}$ linearly in accordance with their size. To accomplish this, we first notice that there are at most only finitely many terms of the set $\{\beta_n:n\in \mathbb{N}\}$ that are strictly less than $\beta$------ represent this set of finite elements by $\{\beta_1,\beta_2,...,\beta_k\}$ counting multiplicities. We next consider the set $\{\beta_n:\beta_n>\beta\}_{n\in \mathbb{N}}$         of all terms that are strictly bigger than $\beta$. We then inductively define a nonincreasing sequence $(\beta_{k+m})_{m\in \mathbb{N}}$ as 
\begin{align*}
&\beta_{k+1}:=\max\{\beta_n:\beta_n>\beta\}_{n\in \mathbb{N}},\\
&\beta_{k+2}:=\max\{\beta_n:\beta_n>\beta\}_{n\in \mathbb{N}}\setminus \{\beta_{k+1}\},\\
& \vdots\\
&\beta_{k+m}:=\max\{\beta_n:\beta_n>\beta\}_{n\in \mathbb{N}},\setminus \{\beta_{k+1},...,\beta_{k+m-1}\},\\
&\vdots
\end{align*}
This decreasing sequence is bounded below by $\beta$, so it converges to $\beta$; for if it converges to any other point-----which, in that case, happens to be a limit point of $\sigma(P)$----- then that contradicts the existence of only one limit point in the spectrum.\\
 Before we go further, it is worth establishing that the set $\{\beta_n:\beta_n>\beta\}_{n\in \mathbb{N}}$ of all eigenvalues of $P$ has been exhausted in the process of constructing the sequence $(\beta_{k+m})_{m\in \mathbb{N}}$, that is, each eigenvalue of $P$ that is strictly bigger than $\beta$ is a term of the sequence $(\beta_{k+m})_{m\in \mathbb{N}}$. This is rather a trivial observation if we show that whenever $\beta_n>\beta$ is an eigenvalue of $P$ there exist only finitely many $j$'s such that $\beta_j>\beta_n$. Now suppose, on the contrary, that there are infinitely many such $j$'s. Then they form an infinite bounded set of real numbers with a limit point greater than or equal to $ \beta_n$. But since $\beta_n>\beta$, it contradicts the fact that $\beta$ is the unique limit point of the $\sigma(T)$.\\
This inductive method of constructing the decreasing sequence is exhaustive too and as an immediate consequence we re-order the eigenvalues of $P$:
$$
 \{\beta_n\}_{n=1}^k \cup \{\beta_n\}_{n=k+1}^\infty; \text{ where } \{\beta_n\}_{n=k+1}^\infty \text{ converges to } \beta.
$$ 
Let us rename and denote by $\{v_n\}_{n=1}^k$ and $\{w_n\}_{n=k+1}^\infty$ the eigenvectors corresponding to the eigenvalues $\{\beta_n\}_{n=1}^k $ and $\{\beta_n\}_{n=k+1}^\infty$ respectively.
Observe that for an arbitrary $x\in \mathcal{H}$, we have 
$
x=\sum_{n=1}^k\inner{x}{v_n}v_n
+\sum_{n=k+1}^\infty \inner{x}{w_n}w_n
$
so that for every $x\in \mathcal{H},$
\begin{align*}
Px &=\sum_{n=1}^k\inner{x}{v_n}P(v_n)+\sum_{n=k+1}^\infty \inner{x}{w_n}P(w_n)\\
&=\sum_{n=1}^k\inner{x}{v_n}\beta_n v_n+\sum_{n=k+1}^\infty \inner{x}{w_n}\beta_n w_n\\
&=\sum_{n=1}^k(\beta_n-\beta)\inner{x}{v_n} v_n+\sum_{n=k+1}^\infty (\beta_n-\beta)\inner{x}{w_n} w_n + \beta I x \\
&=\left(\sum_{n=1}^k(\beta_n-\beta)(v_n\otimes v_n)+\sum_{n=k+1}^\infty (\beta_n-\beta)(w_n\otimes w_n) + \beta I \right) (x). \\
\end{align*}
To conclude this case it suffices to observe that $\beta \geq 0$, $\sum_{n=1}^k(\beta_n-\beta)(v_n\otimes v_n)$ is a self-adjoint finite-rank operator, and $\sum_{n=k+1}^\infty (\beta_n-\beta)(w_n\otimes w_n)$ is a positive compact operator. It then readily follows that $P= \alpha I +K+F$, where $\alpha = \beta, K=\sum_{n=k+1}^\infty (\beta_n-\beta)(w_n\otimes w_n)$ and $F =\sum_{n=1}^k(\beta_n-\beta)(v_n\otimes v_n).$ \\

\noindent
\textit{Case 4.} $\sigma (P)$ has both a limit point and an eigenvalue with infinite multiplicity.\\
Let $\beta\in \{\beta_\lambda:\lambda \in \Lambda\}$ be the unique eigenvalue with infinite multiplicity which compels it to be the unique limit point of the spectrum $\sigma(P)$ of $P$. That the set $\Gamma:=\Lambda \setminus \{\lambda:\beta_\lambda=\beta\}$ is countable is, at this stage, a trivial observation. This leaves us with$\{\beta_\lambda:\lambda \in \Lambda\} =\{\beta_\lambda:\lambda\in \Gamma\}\cup\{\beta\}.$ Since $\{\beta_\lambda:\lambda\in \Gamma\}$ is countable, by the argument in the previous case, we can reorder the eigenvalues of this set in such a way that for some $k\in \mathbb{N}$, $$
\{\beta_\lambda:\lambda\in \Gamma\} = \{\beta_n\}_{n=1}^k \cup \{\beta_n\}_{n=k+1}^\infty \cup \{\beta\},
$$
where, by the constructive method discussed previously, $ \{\beta_n\}_{n=1}^k$ (counting multiplicities) is the set of all eigenvalues strictly less than $\beta$ and $\{\beta_n\}_{n=k+1}^\infty$ is a nonincreasing sequence converging to $\beta$. We next rename and denote by $\{v_n\}_{n=1}^k, \{w_n\}_{n=k+1}^\infty,$ and $\{v_\lambda\}_{\lambda\in \Lambda\setminus \Gamma}$ the eigenvectors corresponding to the eigenvalues $\{\beta_n\}_{n=1}^k,\{\beta_n\}_{n=k+1}^\infty$, and $\{\beta_\lambda\}_{\lambda\in \Lambda\setminus \Gamma}$ respectively. Observe that for an arbitrary $x\in \mathcal{H}$, we have
$x=\sum_{n=1}^k\inner{x}{v_n}v_n
+\sum_{n=k+1}^\infty \inner{x}{w_n}w_n +\sum_{\lambda\in \Lambda\setminus \Gamma}\inner{x}{v_\lambda}v_\lambda.$
This yields that for every $x\in \mathcal{H}$
\begin{align*}
Px &=\sum_{n=1}^k\inner{x}{v_n}P(v_n)+\sum_{n=k+1}^\infty \inner{x}{w_n}P(w_n) +\sum_{\lambda\in \Lambda\setminus \Gamma}\inner{x}{v_\lambda}P(v_\lambda)\\
&=\sum_{n=1}^k\inner{x}{v_n}\beta_n v_n+\sum_{n=k+1}^\infty \inner{x}{w_n}\beta_n w_n + \sum_{\lambda\in \Lambda\setminus \Gamma}\inner{x}{v_\lambda}\beta v_\lambda\\
&=\sum_{n=1}^k(\beta_n-\beta)\inner{x}{v_n} v_n+\sum_{n=k+1}^\infty (\beta_n-\beta)\inner{x}{w_n} w_n \\
&\hspace{2.5cm}+ \sum_{\lambda\in \Gamma}\inner{x}{v_\lambda}\beta v_\lambda + \sum_{\lambda\in \Lambda\setminus \Gamma}\inner{x}{v_\lambda}\beta v_\lambda\\
&=\left(\sum_{n=1}^k(\beta_n-\beta)(v_n\otimes v_n)+\sum_{n=k+1}^\infty (\beta_n-\beta)(w_n\otimes w_n) + \beta I \right) (x) .\\
\end{align*}
It then immediately follows that $P= \alpha I +K+F$, where $\alpha = \beta, K=\sum_{n=k+1}^\infty (\beta_n-\beta)(w_n\otimes w_n)$ and $F =\sum_{n=1}^k(\beta_n-\beta)(v_n\otimes v_n).$ \\
We complete the proof by observing that in all the four possibilities, we get the desired form. 
\end{proof}

Example \ref{ANisNOTvs} establishes the fact that the class of $\mathcal{AN}$ operators is not closed under addition. However, it is easy to see that it is closed under scalar multiplication, that is, if $T\in \mathcal{B}(\mathcal{H},\mathcal{K})$ is $\mathcal{AN}$ and $\alpha\in \mathbb{C}$, then $\alpha T$ is also $\mathcal{AN}$; for if $\mathcal{M}$ is an arbitrary nontrivial closed subspace of $\mathcal{H}$, then $\|\alpha TV_\mathcal{M}\|=|\alpha|\|TV_\mathcal{M}\|=|\alpha|\|TV_\mathcal{M}(x_0)\|=\|\alpha TV_\mathcal{M}(x_0)\|,$ where $x_0\in \mathcal{M}, \|x_0\|=1,$ and $\|TV_\mathcal{M}(x_0)\|=\|TV_\mathcal{M}\|.$

If we consider the class $\mathcal{B}(\mathcal{H})_{\mathcal{AN}^+}$ of positive $\mathcal{AN}$ operators, what can be said about it in the similar vein? To answer this question, let $T_1,T_2 \in \mathcal{B}(\mathcal{H})_{\mathcal{AN}^+}$. It is fairly obvious that $T_1+T_2$ is positive. Moreover, by Theorem \ref{SpThPAN},
$T_1= \alpha_1I +K_1+F_1$ and $T_2=\alpha_2I+K_2+F_2$ where $\alpha_1, \alpha_2\geq 0; K_1,K_2$ are positive compact operators, and $F_1,F_2$ are self-adjoint finite-rank operators. Then $T_1+T_2= (\alpha_1+\alpha_2)I +(K_1+K_2)+(F_1+F_2)$ and hence it is $\mathcal{AN}.$
Also, if $c\in \mathbb{R}, c\geq 0,$ then $cT_1\in \mathcal{B}(\mathcal{H})_{\mathcal{AN}^+}.$ Finally, if $T$ and $-T$ are both in $\mathcal{B}(\mathcal{H})_{\mathcal{AN}^+}$, then $\inner{Tx}{x}\geq 0$
and $\inner{Tx}{x}\leq 0$ which implies that $\inner{Tx}{x}=0$ for each $x\in\mathcal{H}$ and so $T=0$. These observations, together with the fact that $\mathcal{B}(\mathcal{H})_{sa}:=\{T\in \mathcal{B}(\mathcal{H}): T=T^*\}$ is a real Banach space, implies that 
$\mathcal{B}(\mathcal{H})_{\mathcal{AN}^+}$ is a cone in $\mathcal{B}(\mathcal{H})_{sa}$, which is proper in the sense that $\mathcal{B}(\mathcal{H})_{\mathcal{AN}^+}\cap (-\mathcal{B}(\mathcal{H})_{\mathcal{AN}^+})=\{0\}$.\\

\section{Spectral Characterization Of $\mathcal{AN}$ Operators}

For any operator $T\in \mathcal{B}(\mathcal{H},\mathcal{K})$, we know that $T^*T\in \mathcal{B}(\mathcal{H})$ and $T^*T\geq0.$ Moreover, there exists a unique positive operator 
$|T|:=\sqrt{T^*T}$ such that $|T|^2=T^*T$. We state the polar decomposition theorem, which is a standard theorem and its proof is thus omitted.

\begin{prop}[Polar Decomposition Theorem]\label{PDT}
Let $\mathcal{H}, \mathcal{K}$ be  complex Hilbert spaces. If $T\in \mathcal{B}(\mathcal{H},\mathcal{K})$, then there exists a  unique partial 
isometry $U:\mathcal{H}\longrightarrow \mathcal{K}$ with final space $\text{clos}(\text{ran}{T})$ and initial space $\text{clos}(\text{ran}{|T|})$ such that $T=U|T|$ and $|T|=U^*T$. If $T$ is invertible, then $U$ is unitary.
\end{prop}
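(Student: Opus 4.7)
The plan is to build $U$ directly from the fundamental identity
$\||T|x\|^2 = \langle |T|^2 x, x\rangle = \langle T^*Tx, x\rangle = \|Tx\|^2$, valid for every $x \in \mathcal{H}$. First I would note that this identity has two immediate consequences: the map $U_0 : \text{ran}\,|T| \to \text{ran}\,T$ given by $U_0(|T|x) := Tx$ is well-defined (if $|T|x = |T|y$ then $\|T(x-y)\| = \||T|(x-y)\| = 0$) and isometric on its domain; and $\ker|T| = \ker T$.

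Next I would extend $U_0$ by continuity (bounded linear extension) to an isometry from $\text{clos}(\text{ran}\,|T|)$ onto $\text{clos}(\text{ran}\,T)$, and then extend it to all of $\mathcal{H}$ by declaring it to be $0$ on $\text{clos}(\text{ran}\,|T|)^{\perp}$. Since $|T|$ is self-adjoint, one has $\text{clos}(\text{ran}\,|T|)^{\perp} = \ker|T| = \ker T$, so this extension is consistent with the relation $T = U|T|$. Calling the resulting operator $U$, it is by construction a partial isometry with initial space $\text{clos}(\text{ran}\,|T|)$ and final space $\text{clos}(\text{ran}\,T)$, and $T = U|T|$ holds by design. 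To obtain $|T| = U^*T$, I would write $U^*T = U^*U|T|$ and use that $U^*U$ is the orthogonal projection onto the initial space $\text{clos}(\text{ran}\,|T|)$; since $|T|x$ lies in this space for every $x$, we get $U^*U|T| = |T|$, as required.

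For uniqueness, suppose $V$ is any partial isometry with initial space $\text{clos}(\text{ran}\,|T|)$ and $T = V|T|$. Then $V$ and $U$ agree on $\text{ran}\,|T|$, hence on $\text{clos}(\text{ran}\,|T|)$ by continuity, and both vanish on the orthogonal complement of this space (from the initial-space condition), so $V = U$. Finally, if $T$ is invertible, then $T^*T$ is a positive invertible operator, so $|T| = \sqrt{T^*T}$ is invertible; in particular $\text{clos}(\text{ran}\,|T|) = \mathcal{H}$ and $\text{clos}(\text{ran}\,T) = \mathcal{K}$, so $U$ is an isometry of $\mathcal{H}$ onto $\mathcal{K}$, i.e., unitary.

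The main obstacle is not conceptual but bookkeeping: one must handle carefully the chain of identifications $\text{clos}(\text{ran}\,|T|)^{\perp} = \ker|T|^{*} = \ker|T| = \ker T$, justify that the extension by continuity preserves the isometry property, and be precise about on which subspaces the various equalities are first established before passing to the whole space.
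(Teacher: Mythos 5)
Your construction is correct and is the standard textbook argument; the paper itself omits the proof of this proposition, explicitly citing it as a standard result, so there is nothing to compare against. Every step you outline --- well-definedness and isometry of $U_0$ on $\mathrm{ran}\,|T|$ via $\||T|x\|=\|Tx\|$, extension by continuity, extension by zero on $\mathrm{clos}(\mathrm{ran}\,|T|)^{\perp}=\ker|T|=\ker T$, the identity $U^*U|T|=|T|$, uniqueness from agreement on $\mathrm{ran}\,|T|$ plus the initial-space condition, and invertibility of $|T|$ when $T$ is invertible --- is sound.
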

 The following lemma is the key to the main theorem of this section.
\begin{lemma} 
Let $\mathcal{H}$ and $\mathcal{K}$ be complex Hilbert spaces and let $T\in \mathcal{B}(\mathcal{H},\mathcal{K})$. Then $T$ is $\mathcal{AN}$ iff $|T|$ is $\mathcal{AN}$.
\end{lemma}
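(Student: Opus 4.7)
The plan is to reduce everything to the ``Trick'' criterion \ref{Trick} and then apply Theorem \ref{!!} to rewrite the $\mathcal N$ condition for $TV_\mathcal M$ in terms of $|T|$.

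First I would observe that by Lemma \ref{Trick}, $T$ being $\mathcal{AN}$ is equivalent to the statement that $TV_\mathcal M$ satisfies $\mathcal N$ for every nontrivial closed subspace $\mathcal M \subseteq \mathcal H$, and likewise $|T|$ is $\mathcal{AN}$ iff $|T|V_\mathcal M$ satisfies $\mathcal N$ for every such $\mathcal M$. Thus it suffices to prove that for every fixed nontrivial closed subspace $\mathcal M$, $TV_\mathcal M$ is $\mathcal N$ iff $|T|V_\mathcal M$ is $\mathcal N$.

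Next I would use the equivalence of $(1)$ and $(7)$ in Theorem \ref{!!}: an operator $S\in \mathcal B(\mathcal M,\mathcal K)$ satisfies $\mathcal N$ iff $S^*S$ does. Applying this with $S=TV_\mathcal M$ gives
\[
TV_\mathcal M \text{ is } \mathcal N \iff (TV_\mathcal M)^*(TV_\mathcal M)=V_\mathcal M^* T^*T V_\mathcal M = V_\mathcal M^* |T|^2 V_\mathcal M \text{ is } \mathcal N,
\]
while applying it with $S=|T|V_\mathcal M$ and using self-adjointness of $|T|$ gives
\[
|T|V_\mathcal M \text{ is } \mathcal N \iff (|T|V_\mathcal M)^*(|T|V_\mathcal M)=V_\mathcal M^* |T|^2 V_\mathcal M \text{ is } \mathcal N.
\]
The right-hand sides coincide, so the two conditions on the left are equivalent.

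Combining this with Lemma \ref{Trick} in both directions closes the argument: $T$ is $\mathcal{AN}$ iff for every nontrivial closed $\mathcal M$ the operator $V_\mathcal M^*|T|^2 V_\mathcal M$ is $\mathcal N$ iff $|T|$ is $\mathcal{AN}$. There is no real obstacle here; the entire proof is just the bookkeeping of noting that $(TV_\mathcal M)^*(TV_\mathcal M)$ and $(|T|V_\mathcal M)^*(|T|V_\mathcal M)$ are literally the same operator on $\mathcal M$, and then invoking the two earlier results. No use of polar decomposition is needed for this lemma, though Proposition \ref{PDT} will be what turns this lemma into the full spectral characterization of $\mathcal{AN}$ operators in the next theorem.
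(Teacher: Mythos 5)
Your proof is correct, but it takes a different route from the paper's. The paper argues directly: for every $x$ one has $\|Tx\|^2=\inner{T^*Tx}{x}=\inner{|T|^2x}{x}=\||T|x\|^2$, so for any nontrivial closed subspace $\mathcal M$ the restrictions $T|_{\mathcal M}$ and $|T||_{\mathcal M}$ have the same norm and attain it at exactly the same unit vectors; hence one satisfies $\mathcal N$ on $\mathcal M$ iff the other does, and the lemma follows without invoking any earlier machinery. You instead route through Lemma \ref{Trick} and the equivalence $(1)\Leftrightarrow(7)$ of Theorem \ref{!!}, reducing both sides to the single condition that $V_{\mathcal M}^*|T|^2V_{\mathcal M}$ be $\mathcal N$ for every $\mathcal M$; this is exactly the compression trick the paper uses in Section 4 to prove that $\alpha I+K+F$ is $\mathcal{AN}$, so your argument is stylistically consistent with that part of the paper and every step checks out (the key identity $(TV_{\mathcal M})^*(TV_{\mathcal M})=(|T|V_{\mathcal M})^*(|T|V_{\mathcal M})=V_{\mathcal M}^*|T|^2V_{\mathcal M}$ is correct since $|T|$ is self-adjoint). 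The paper's version is more elementary and yields the slightly stronger pointwise information that the norming vectors coincide; yours is shorter to state given the tools already in hand and makes the logical dependence on Theorem \ref{!!} explicit. Either is acceptable.
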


\begin{proof}
Let $\mathcal{M}$ be an arbitrary nontrivial closed subspace of $\mathcal{H}$. For any $x\in \mathcal{M}$ notice that 
\begin{multline*}
\|T|_{\mathcal{M}}(x)\| =\|Tx\| = \sqrt{\inner{Tx}{Tx}}= \sqrt{\inner{T^*Tx}{x}}\\
= \sqrt{\inner{|T|^2x}{x}} = \sqrt{\inner{|T|x}{|T|x}} ==\||T|(x)\| = \||T||_{\mathcal{M}}(x)\|,
\end{multline*}
which essentially guarantees that 
$$
\|T|_{\mathcal{M}}\| =\||T||_{\mathcal{M}}\|.
$$
Since $\mathcal{M}$ is arbitrary, the assertion follows.
\end{proof}

\begin{exam}\label{coisom}  Let $V: l^2 \to l^2$ be an isometry onto a subspace $\mathcal M$ with infinite codimension. By Proposition~\ref{Isometry is AN}, $V$ is $\mathcal{AN}.$ But $|V^*|= VV^*=P_{\mathcal M}$ is the orthogonal projection onto $\mathcal M$ and since $P_{\mathcal M}$ has two eigenvalues of infinite multiplicity, $P_{\mathcal M}$ is not $\mathcal{AN}$ by Corollary~\ref{oneinfmult}.  Thus, $V$ is $\mathcal{AN}$ but $V^*$ is not $\mathcal{AN}.$
\end{exam}

By the preceding lemma, the polar decomposition theorem and the spectral theorem for positive $\mathcal{AN}$ operators, we can safely consider the following theorem to be fully proved.
\begin{thm}[Spectral Theorem For $\mathcal{AN}$ Operators]\label{SpThAN}
Let $\mathcal{H}$ and $\mathcal{K}$ be complex Hilbert spaces of arbitrary dimensions and let $T\in \mathcal{B}(\mathcal{H},\mathcal{K})$ such that $|T|=U^*T$, where $U$ is the unique partial 
isometry $U:\mathcal{H}\longrightarrow \mathcal{K}$ with final space $\text{clos}(\text{ran}{T})$ and initial space $\text{clos}(\text{ran}{|T|})$.
Then $T$ is an $\mathcal{AN}$ operator if and only if $U^*T$ is of the form $U^*T= \alpha I +F+K$, where $\alpha \geq 0, K$ is a positive compact operator and $F$ is self-adjoint finite-rank operator. 
\end{thm}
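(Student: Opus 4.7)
The plan is to chain together three previously established facts: the polar decomposition (Proposition \ref{PDT}), the reduction lemma that $T$ is $\mathcal{AN}$ if and only if $|T|$ is $\mathcal{AN}$, and the spectral characterization of positive $\mathcal{AN}$ operators (Theorem \ref{SpThPAN}). Since the statement of the theorem is phrased in terms of $U^*T$ rather than $|T|$, the role of the polar decomposition is essentially bookkeeping: it supplies the identity $U^*T = |T|$ so that the spectral decomposition can be transported from the positive operator $|T|$ back to an expression involving $T$ itself.

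For the forward direction, I would assume $T\in\mathcal{B}(\mathcal{H},\mathcal{K})$ is $\mathcal{AN}$. By the preceding lemma, $|T|\in \mathcal{B}(\mathcal{H})$ is then an $\mathcal{AN}$ operator, and it is positive by construction. Applying Theorem \ref{SpThPAN} to $|T|$ yields $\alpha\geq 0$, a positive compact operator $K$, and a self-adjoint finite-rank operator $F$ such that $|T|=\alpha I + K + F$. Using $|T|=U^*T$ from Proposition \ref{PDT}, this is exactly the required form $U^*T = \alpha I + K + F$.

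For the reverse direction, I would assume $U^*T = \alpha I + K + F$ of the stated form. Since $U^*T = |T|$, the operator $\alpha I + K + F$ equals the positive operator $|T|$, so it satisfies the hypotheses of Theorem \ref{SpThPAN} (positivity is automatic and the decomposition on the right-hand side is precisely what that theorem's sufficiency asserts is $\mathcal{AN}$). Thus $|T|$ is $\mathcal{AN}$, and the preceding lemma then gives that $T$ is $\mathcal{AN}$.

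There is no substantial obstacle at this final stage — the theorem is essentially a packaging result, which is why the authors note it can be considered fully proved. The only point requiring a moment of care is verifying that the decomposition in the statement really is a decomposition of the positive operator $|T|$ (so that Theorem \ref{SpThPAN} may legitimately be invoked in both directions); this follows immediately from the defining identity $|T| = U^*T$ provided by the polar decomposition. All of the genuine work lies in the preceding sections, namely in the necessary conditions culminating in Theorem \ref{NC}, the sufficiency arguments for operators of the form $\alpha I + K + F$, and the reduction lemma relating $T$ and $|T|$.
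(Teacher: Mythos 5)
Your proposal is correct and follows exactly the route the paper intends: the paper gives no explicit proof, asserting the theorem follows from the lemma that $T$ is $\mathcal{AN}$ iff $|T|$ is $\mathcal{AN}$, the identity $|T|=U^*T$ from the polar decomposition, and Theorem \ref{SpThPAN} applied to the positive operator $|T|$. You have simply written out that chain of implications carefully in both directions, which is precisely what the paper's one-line justification relies on.
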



\providecommand{\bysame}{\leavevmode\hbox to3em{\hrulefill}\thinspace}
\providecommand{\MR}{\relax\ifhmode\unskip\space\fi MR }
\providecommand{\MRhref}[2]{%
  \href{http://www.ams.org/mathscinet-getitem?mr=#1}{#2}
}
\providecommand{\href}[2]{#2}

\end{document}